\newcommand{\R}{{\mathbb{R}}}
\newcommand{\LL}{{\mathcal{L}}}
\renewcommand{\d}{{\mathrm d}}
\newcommand{\verti}[1]{\ensuremath{\left\lvert#1\right\rvert}}
\newcommand{\mcl}{m_{\text{\sc cl}}}
\newtheorem{Theorem}{Theorem}[section]
\newtheorem{Remark}[Theorem]{Remark}
\newcommand{\ch}[1]{#1}
\newcommand{\ignore}[1]{}
\newcommand{\st}[1]{}
\renewcommand{\ignore}[1]{}
\numberwithin{equation}{section}
\begin{document}

\title{Droplet motion with contact-line friction:
long-time asymptotics in complete wetting}

\author{Lorenzo Giacomelli}
\address[Lorenzo Giacomelli]{SBAI Department, Sapienza University of Rome - Via A. Scarpa 16, 00161 Roma, Italy - lorenzo.giacomelli@uniroma1.it}

\author{Manuel V.~Gnann}
\address[Manuel V.~Gnann]{Delft Institute of Applied Mathematics, Faculty of Electrical Engineering, Mathematics and Computer Science, Delft University of Technology, Mekelweg 4, 2628 CD Delft, The Netherlands}

\author{Dirk Peschka}
\address[Dirk Peschka]{Weierstrass Institute - Mohrenstrasse 39, 10117 Berlin, Germany - dirk.peschka@wias-berlin.de}

\thanks{LG acknowledges discussions with Maria Chiricotto. MVG appreciates discussions with Jochen Denzler, Robert McCann, and Christian Seis regarding self-similar asymptotics preceding the preperation of this work. MVG was partially supported by the Deutsche Forschungsgemeinschaft (DFG) under project \# 334362478. DP thanks Luca Heltai and Marita Thomas for fruitful discussions and acknowledges the financial support within the DFG-Priority Programme 2171 by project \# 422792530.}

\begin{abstract}
    We consider the thin-film equation for a class of free boundary conditions modelling friction at the contact line, as introduced by E and Ren. Our analysis focuses on formal long-time asymptotics of solutions in the perfect wetting regime. In particular, through the analysis of quasi-self-similar solutions, we characterize the profile and the spreading rate of solutions depending on the strength of friction at the contact line, as well as their (global or local) corrections, which are due to the dynamical nature of the free boundary conditions. These results are complemented with full transient numerical solutions of the free boundary problem.
\end{abstract}

\maketitle


\section{Introduction}

\subsection{\ch{Thin-film equations}}

Thin-film equations are a class of fourth-order degenerate\ch{-}parabolic equations whose prototype, in one space dimension, is
\begin{equation}\label{TFE}
    \partial_t h + \partial_y (m(h) \partial_y^3 h) =0 \quad\mbox{on }\ \{h>0\}:=\{(t,y)\in (0,\infty)\times \R:\ h(t,y)>0\},
\end{equation}
where $m$ is a mobility which degenerates at $h=0$.

\medskip

Thin-film equations are formally derived as leading-order approximations of the Navier-Stokes equations in a suitable regime, which is known as \ch{the} {\em lubrication approximation} \cite{OronDavisBankoff1997,BonnEggersIndekeuMeunierRolley2009,GuentherProkert2008}. In this case, $h$ represents the height of a thin layer or droplet of a Newtonian fluid on a flat solid substrate and the mobility $m$ \ch{often} has the form $m(h)=h^3 +b^{3-n} h^n$, the parameters $b$ and $n$ being related to \ch{a} slip condition imposed at the liquid-solid interface: in particular, $n = 1$ corresponds to Greenspan's slip condition \cite{Greenspan}, $n=2$ corresponds to (linear) Navier slip and $n=3$ (or $b=0$) corresponds to no slip at the substrate. The case $m(h)=h$ may also be seen as the lubrication approximation of the two-dimensional Hele-Shaw flow in the half-space \cite{GO2}, and in this case the lubrication approximation has been given rigorous justifications \cite{GO-var,GO2,KnuepferMasmoudi2013,KM,MatiocProkert2012}.
Nonlinear free boundary problems with $m(h)=1$ are discussed in the context of surface diffusion \cite{dziwnik2014stability}. General traveling wave solutions for $m(h)=h^n$ with $0\le n< 3$ and beyond are discussed in \ch{\cite{BKO,king2001moving,ggo.2016}}.

\medskip

We are interested in situations in which an interface exists which separates a dry region of the substrate from a wetted one:
\begin{equation}\label{support}
    \{h(t,\cdot)>0\}=(s_-(t),s_+(t)).
\end{equation}
In this case, \eqref{TFE} is complemented by two obvious boundary conditions at $s_\pm(t)$, respectively: the defining condition
\begin{equation}\label{BC1}
    h(t,s_\pm(t)) =0,
\end{equation}
and the kinematic condition
\begin{equation}\label{BC2}
    \big(h^{-1} m(h) \partial_y^3 h\big)|_{y= s_\pm(t)} =\dot s_\pm(t),
\end{equation}
where here and after we agree that
$$
%
%
\ch{
f|_{y=s_+}= \lim_{\substack{y\to s_+ \\ y<s_+}}f(y), \qquad f|_{y=s_-}= \lim_{\substack{y\to s_- \\ y>s_-}}f(y).
}
$$

We are interested in situations in which the support \ch{$\{h(t,\cdot)>0\}$} is allowed to evolve in time, leading to a genuine free boundary problem for which a third condition is required. Appropriate choices of such a third boundary condition are being debated since decades by now. \ch{Starting with the work of Bernis and Friedman \cite{bf.1990}, most of the analytical theory concentrated on the condition of constantly zero contact angle, focusing on existence of weak solutions \cite{BBD,BP,bdgg.1998,g.2004,ag.2004}, on their qualitative properties \cite{b.1996,b.1996.2,hs.1998,dgg.2001,dgs.2001,g.2003,g.2004.aihp,gs.2005,dgg.2006,f.2013,f.2014,f.2016,DNF.2022}, and on well-posedness in weighted spaces \cite{GKO,BGKO2016,ggo.2013,Gnann2015,BelgacemGnannKuehn2016,Gnann2016,seis,ggko,GnannPetrache2018,GnannIbrahimMasmoudi2019}. For the constant, non-zero contact angle case we refer to \cite{o.1998,BGK, M15, Knuepfer2011,Knuepfer2015,Degtyarev2017,Knuepfer2022,Esselborn2016,MajdoubMasmoudiTayachi2021}.
For quasistatic models of droplet evolution we refer to \cite{semprebon2014onset}  for the Stokes flow and to \cite{Greenspan,GrunewaldKim2011,DeMe21} for thin-film models.
}

\smallskip

\ch{We focus on }
a class of contact-line conditions, first considered in \cite{Greenspan,EhrhardDavis} in special cases, 
\ch{that} relate the contact-line velocity, $\dot s$, to the dynamic contact angle. \ch{Analytical works for this class are limited to a few contributions \cite{CG-simai,CG-cms,CG-ifb}.} The class has been motivated and generalized by E and Ren building up on a simple, basic principle: consistency with the second law of thermodynamics in the isothermal case \cite{RenE2007,RHE,RenE.2011}. \ch{We} now introduce \ch{this class} directly at the level of lubrication theory\ch{\st{,}.}

%
\subsection{\ch{Contact-line frictional laws in complete wetting}}

At leading order in lubrication approximation, and after a normalization, the surface energy reads
\begin{equation}\label{def-E}
    \mathcal E[h(t)]=  \int_{s_-(t)}^{s_+(t)} \big(\tfrac12(\partial_y h)^2 -S\big) \d y,
\end{equation}
where $S\in \R$ is (proportional to) the so-called spreading coefficient \cite{GO-var}. We are interested in a regime where the spreading coefficient of the solid/liquid/vapor system vanishes, i.e., the complete wetting regime $S=0$. This is a generic \ch{situation} in the so-called ``moist'' case, which concerns for instance a surface which has been pre-exposed to vapor \cite{deGennes}: thus
\begin{equation}\label{def-E0}
    \mathcal E[h(t)]= \tfrac12 \int_{s_-(t)}^{s_+(t)} (\partial_y h)^2 \d y,
\end{equation}
in what follows (for the case $S\ne 0$, see e.g. the discussions in \cite{O,bgk.2005,M15,DurG}).
After integrations by parts (see \cite[formula (1.13)]{CG-ifb}), \eqref{TFE}-\eqref{BC2} formally yield
\begin{align}\nonumber
    \tfrac{\d}{\d t} \mathcal E[h(t)] =& \dot{s}_+(t)\tfrac12 (\partial_y h)^2|_{y=s_+} -\dot{s}_-(t)\tfrac12 (\partial_y h)^2|_{y=s_-}+\int_{s_-(t)}^{s_+(t)}\partial_y h(\partial_{t}\partial_y h)\d y\\
    =& - \dot s_+(t)\big(\tfrac12(\partial_y h)^2 - h \partial_y^2 h\big)|_{y=s_+}
   + \dot s_-(t)\big(\tfrac12(\partial_y h)^2 -  h \partial_y^2 h\big)|_{y=s_-} \nonumber \\
   &- \int_{s_-(t)}^{s_+(t)}\!m(h) (\partial_y^3 h)^2 \, \d y.
    \label{dissHeur}
\end{align}
If by contradiction $ h (\partial_y^2h) |_\ch{y=s_{\pm}(t)} =: k_{\pm} \ne 0$ then one would have
\begin{align*}
\frac12 \partial_y \big((\partial_y h)^2\big) &= (\partial_y h) (\partial_y^2 h)\sim k_{\pm} h^{-1} \partial_y h = k_{\pm} \partial_y \log h && \mbox{as} \quad y\to s_{\pm}(t)^{\mp},
\end{align*}
whence $(\partial_y h)^2$ would become unbounded as $y\to s_{\pm}(t)^{\mp}$. Hence, $h (\partial_y^2 h) |_\ch{y = s_{\pm}(t)} = 0$ for solutions with finite slope at the contact line,
which is the class we are interested in. Therefore, \eqref{dissHeur} reads as
\begin{equation}\label{dissHeur2}
\tfrac{\d}{\d t} \mathcal E[h(t)] = -\tfrac12 \dot s_+(t)(\partial_y h)^2|_{y=s_+(t)} + \tfrac12\dot s_-(t)(\partial_y h)^2|_{y=s_-(t)}  - \int_{s_-(t)}^{s_+(t)} m(h) (\partial_y^3 h)^2 \, \d y.
\end{equation}
Consistency with the second law of thermodynamics implies that the first two terms on the right-hand side of \eqref{dissHeur2} must be non-positive. A simple form of constitutive relations which enforces non-positivity is
\begin{equation}\label{bc-pre}
(\partial_y h)^2|_\ch{x= s_\pm(t)} = \ch{ f_{\ell} (\pm \dot{s}_\pm)} \quad\mbox{with}\ f_\ell(\sigma)\sigma \ch{\ge 0} \quad \mbox{for all $\sigma\in \R$.}
%
%
\end{equation}
\ch{According to \eqref{bc-pre}, receding fronts with speed $\sigma<0$ only exist if $f_\ell(\sigma)=0$. Hence, in complete wetting, receding fronts (if any)}
%
%
have zero contact angle.
Furthermore, note that the more standard zero contact-angle condition $(\partial_y h)(t,s_\pm(t))=0$ corresponds to the \ch{limit} of \ch{vanishing} contact-line dissipation, $f_\ell \equiv 0$. For the analogue of \eqref{bc-pre} in partial wetting ($S<0$) we refer to \cite{CG-ifb,peschka2018variational}.

\smallskip

The simple argument above can in fact be embedded into a more general formal gradient-flow structure of the system, based on a separation of dual forces into a bulk and a contact-line dissipation. This structure, which we elaborate in \ch{Appendix}~\ref{sec:numerics}, is also at the basis of the discretization that we adopt in numerical simulations.

\subsection{\ch{Goals}}\label{ss:goals}

Intermediate asymptotics for \eqref{TFE} with the Ren-E boundary condition \eqref{bc-pre}, such as the \ch{Voinov-Cox-Hocking law} (see the discussions in \cite{BDDG,GO,AG1,ggo.2016,GnannWisse2022}), has been formally worked out in \cite{CG-cms}. Instead, here we will focus on the long-time dynamics. In complete wetting, it is expected that generic solutions spread indefinitely, covering the whole real line with a layer of zero-thickness in the limit as $t\to +\infty$. Therefore, the long-time dynamics can be equivalently captured by considering a power-law form $m(h) = h^n$ of the mobility, instead of its full form $m(h)=h^3+b^{3-n}h^n$. Then \eqref{TFE} reads
\begin{equation}\label{TFE-n}
    \partial_t h + \partial_y (h^n \partial_y^3 h) =0 \quad\mbox{on}\ \{h>0\}, \qquad n\in [1,3)\ch{\mbox{\st{.},}}
\end{equation}
and we are led to consider the following free boundary problem:
\begin{subequations}\label{tfe_classical_unscaled}
    \begin{align}
        \partial_t h + \partial_y \big(h^n \partial_y^3 h\big) & = 0                                      &  & \mbox{for} \quad y \in (s_-(t),s_+(t)), \label{tfe_classical_1_unscaled} \\
        h                                                   & = 0                                      &  & \mbox{at} \quad y = s_\pm(t), \label{tfe_classical_2_unscaled}           \\
        (\partial_y h)^2                                    & \ch{= f_\ell(\pm \dot s_\pm(t))} &  & \mbox{at} \quad y = s_\pm(t), \label{tfe_classical_3_unscaled}           \\
%
%
        h^{n-1} (\partial_y^3 h)                            & = \dot s_\pm(t)                          &  & \mbox{at} \quad y = s_\pm(t)\ch{.} \label{tfe_classical_4_unscaled}
    \end{align}
\end{subequations}
In the absence of contact-line friction\ch{\st{, the thin-film equation} ($f_\ell\equiv 0$), \eqref{tfe_classical_unscaled}} admits self-similar solutions \cite{BPW}, which are expected to describe the long-time dynamics of generic solutions (however, rigorous results are available for $n=1$ only \cite{CarrilloToscani2002,carlen-ulusoy,MMCS,seis,CarlenUlusoy2014,Gnann2015}).
\ch{\st{In general, {\eqref{bc-pre}}} If $f_\ell\not\equiv 0$, \eqref{tfe_classical_3_unscaled}} breaks the self-similar structure of \ch{\st{{\eqref{TFE-n}}}\eqref{tfe_classical_1_unscaled}}. Nevertheless, long-time dynamics may be inferred from the analysis of (quasi-)self-similar solutions, where the non-self-similar part of the operator is seen as a small modulation in time. This method has already been applied to thin-film equations for related asymptotic studies \cite{AG1,BDDG}.

\subsection{\ch{The model problem}}

We assume \ch{\st{a}} prototypical power-law form\ch{s} of $f_\ell$, i.e.,
\ch{\begin{equation}\label{BC2-bis}
    f_\ell(\sigma)=dg_\alpha(\sigma),\qquad \mbox{where} \quad g_\alpha(\sigma)=\max\big\{0,|\sigma|^{\alpha-1}\sigma\big\} \quad\mbox{or}\quad  g_\alpha(\sigma) = |\sigma|^{\alpha-1}\sigma,
\end{equation}
}
\ch{and} $\alpha > 0$ and $d>0$ are constants encoding the strength of friction at the contact line. \ch{For $S=0$,  the former $g_\alpha$ allows for receding fronts, while the latter alternative does not.} \ch{\st{Therefore, {\eqref{bc-pre}} assumes the form}
}

Equation \eqref{tfe_classical_1_unscaled} and the kinematic condition \eqref{tfe_classical_4_unscaled} have two scaling invariances,
\begin{equation}\label{scaling1}
    (t,y,h)\mapsto (T_*\hat t, Y_*\hat y, H_* \hat h), \quad \mbox{where} \quad T_* = Y_*^{4} H_*^{-n}.
\end{equation}
We use one \ch{invariance} to normalize the droplet's mass to be $2$, \ch{i.e.}
\begin{equation}\label{scaling2}
    M :=\int_{s_-(t)}^{s_+(t)} \ch{h(t,y)}\,\d y = 2 H_* Y_*.
\end{equation}

With \ch{\st{this choice, {\eqref{BC2-bis}} reads as}} \ch{the choice in \eqref{BC2-bis}, \eqref{tfe_classical_3_unscaled} reads as}
\[
    \big(\partial_{\hat y} \hat h|_{\hat y= \hat s_\pm(\hat t)}\big)^2 =
    D^2 \ch{g_\alpha\big(\pm \tfrac{\d \hat s_\pm}{\d \hat t}(\hat t)\big)} \ch{\qquad\text{where}\qquad D^2:=d\frac{Y_*^2}{H_*^2}\frac{Y_*^\alpha}{T_*^\alpha}}.
    %
    %
\]
If $\alpha\ne \frac{4}{n+3}$, we may use the second scaling invariance to fix the constant $D>0$ to be $1$:
\begin{equation}\label{D=1}
    1=D^2= \frac{dY_*^{2+\alpha}}{H_*^2 T_*^\alpha} \stackrel{\eqref{scaling1}}=\frac{dY_*^{2-3\alpha}}{H_*^{2-\alpha n}} \stackrel{\eqref{scaling2}}= \frac{d\big(\tfrac{M}{2}\big)^{2-3\alpha}}{H_*^{4-\alpha (n+3)}} \quad \Leftrightarrow \quad H_*^{4-\alpha (n+3)}= d\big(\tfrac{M}{2}\big)^{2-3\alpha}.
\end{equation}
Therefore, removing hats, \ch{\st{in what follows}} we will consider the following free boundary problem with initial mass $M=2$ and with $D=1$ if $\alpha\ne \frac{4}{n+3}$:
\begin{subequations}\label{tfe_classical}
    \begin{align}
        \partial_t h + \partial_y \big(h^n \partial_y^3 h\big) & = 0                                    &  & \mbox{for} \quad y \in (s_-(t),s_+(t)), \label{tfe_classical_1} \\
        h                                             & = 0                                    &  & \mbox{at} \quad y = s_\pm(t), \label{tfe_classical_2}           \\
     \big(\tfrac{1}{D} \partial_y h\big)^2      & =\ch{g_\alpha(\pm \dot s_\pm(t))} &  & \mbox{at} \quad y = s_\pm(t), \label{tfe_classical_3}           \\
%
%
        h^{n-1} \partial_y^3 h                        & = \dot s_\pm(t)                        &  & \mbox{at} \quad y = s_\pm(t), \label{tfe_classical_4} \\
       \int_{s_-(t)}^{s_+(t)} h(\ch{t},y)\d y  & =2.                                           &  & \label{tfe_classical_mass}
    \end{align}
\end{subequations}
We seek {\ch{\st{for}} even solutions, with $s=s(t)=s_+(t) = - s_-(t)$ denoting the position of the right free boundary. Furthermore, \ch{since we are interested in the long-time dynamics, we seek solutions with advancing contact lines $\dot{s}>0$, whence \eqref{BC2-bis} reduces to $g_\alpha(\dot s)=(\dot s)^\alpha$. It} is convenient to pass to a fixed domain by the change of variables
\begin{equation}\label{scaling_ansatz}
h(t,y)=s^{-1} H(t,x), \quad x=s^{-1}y.
\end{equation}
Then, taking symmetry into account, \eqref{tfe_classical} reads as
\begin{subequations}\label{tfe_classical_fixed}
    \begin{align}
        s^{n+4} \partial_t H -s^{n+3}\dot s \partial_x (x H)+ \partial_x \big(H^n \partial_x^3 H\big) & = 0                                    &  & \mbox{for} \quad x \in (0,1), \label{tfe_classical_1_fixed} \\
        H                                             & = 0                                    &  & \mbox{at} \quad x = 1, \label{tfe_classical_2_fixed}           \\
%
%
        s^{-4}\big(D^{-1} \partial_x H\big)^2      & =\left(\dot s\right)^\alpha &  & \mbox{at} \quad x = 1, \label{tfe_classical_3_fixed}           \\
        s^{-(n+3)} H^{n-1}\partial_x^3 H                        & = \dot s                        &  & \mbox{at} \quad x = 1, \label{tfe_classical_4_fixed}
        \\
        \partial_x H = \partial_x^3 H                        & = 0                        &  & \mbox{at} \quad x = 0, \label{tfe_classical_5_fixed}
        \\
        \int_0^{1} H(\ch{t},x)\d x                        & = 1\ch{\mbox{\st{.},}}                        &  &  \label{tfe_classical_mass_fixed}
    \end{align}
\end{subequations}
\ch{which we will consider in what follows.} Note that the combination of  \eqref{tfe_classical_3_fixed} and \eqref{tfe_classical_4_fixed} yields
\begin{align}
\label{tfe_classical_34_fixed}
    \big(D^{-1} \partial_x H\big)^2 &= s^{4-\alpha(n+3)} (H^{n-1}\partial_x^3 H)^\alpha && \mbox{at} \quad x= 1.
\end{align}
A generic symmetric solution of \ch{\st{the transient}} problem \eqref{tfe_classical} is shown in Fig.~\ref{fig:generic_solution}\ch{, where we choose initial conditions highlighting transient behavior and convergence to self-similar solutions for $h(t,y)$.}
\begin{figure}[b]
    \centering
    \includegraphics[width=\textwidth]{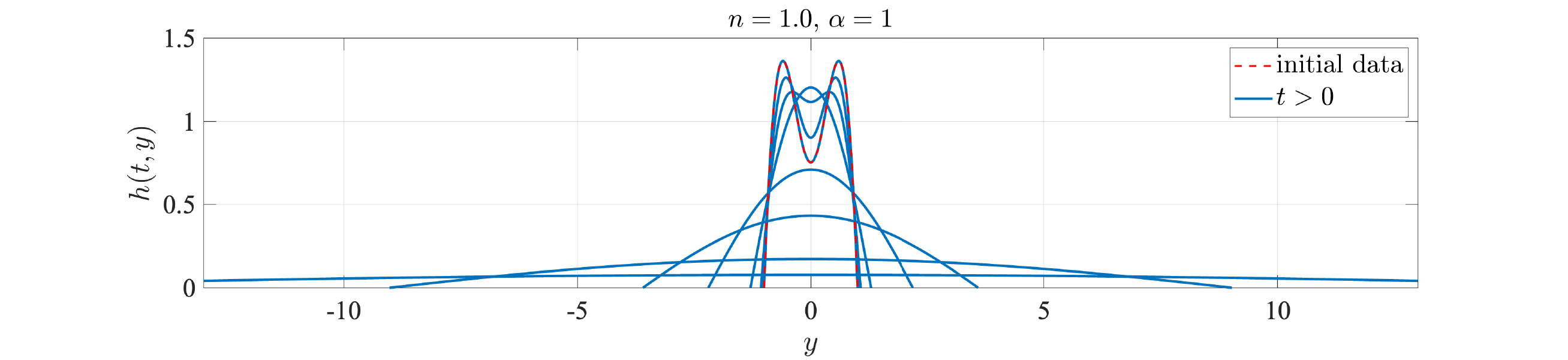}
    \caption{Solution $h(t,y)$ of the transient problem \eqref{tfe_classical} with $n=\alpha=1$ and $D=1$ with (symmetric) initial data $h(t=0,y)=\tfrac{15}{4}((1+y)(1-y)-\tfrac25(1+\cos(\pi y))$ for $s_\pm(t=0)=\pm 1$.}
\label{fig:generic_solution}
\end{figure}
%

\subsection{\ch{Outline}}

It is apparent from \eqref{tfe_classical_34_fixed} that \eqref{tfe_classical_fixed} has a self-similar structure if and only if
\begin{equation}\label{alpha-critical}
 \alpha = \frac{4}{n+3}.
\end{equation}
\ch{
In \S \ref{s:ss} we discuss the case \eqref{alpha-critical}, identifying for any $D\ge 0$ a unique self-similar profile (cf. Theorem \ref{thm}); we also discuss its behavior with respect to $D$ and its stability.
%
%
In \S \ref{s:app_2} we investigate quasi-self-similar solutions. Our analysis shows that the long-time dynamics is dominated by:

\smallskip

$\bullet$  contact-line dissipation if $\alpha <\frac{4}{n+3}$ (strong contact-line friction, \S \ref{qss:strong}): in particular, the long-time scaling law depends on $\alpha$;

$\bullet$ bulk dissipation if $\alpha >\frac{4}{n+3}$ (weak contact-line friction, \S \ref{qss:weak}), leading to the same long-time scaling law as in the case of null contact-line dissipation.

\smallskip

For \eqref{alpha-critical}, both dissipations contribute equally and we speak of the balanced case. A summary of the results obtained is given in \S \ref{s:concl}, which also contains their discussion and indicates further directions. Finally, in Appendix \ref{sec:numerics} we detail the gradient-flow formulation of the problem and we discuss how this formulation drives our numerical scheme.
}

\section{Self-similar solutions}\label{s:ss}

We seek \ch{\st{for}} symmetric and mass-preserving self-similar solutions of \eqref{tfe_classical} in \ch{the} case \ch{when} the balance condition \eqref{alpha-critical} holds. In terms of $H$, this translates into the ansatz
\begin{equation}\label{ansatz_self}
H(t,x)= H(x)\quad\mbox{and} \quad s(t)= (\gamma^{-1} B^2 t)^{\gamma}, \qquad \gamma=\frac{1}{n+4},
\end{equation}
where $B>0$ is an unknown constant. Inserting \eqref{ansatz_self} into \eqref{tfe_classical_fixed}, integrating once using \eqref{tfe_classical_5_fixed} and recalling \eqref{tfe_classical_34_fixed}, we find
\begin{subequations}\label{ss-new}
    \begin{align}
        H^{n-1} \tfrac{\d^3 H}{\d x^3}       & = B^2 x && \mbox{in} \quad (0,1), \label{ss-new-1}                                                           \\
        \tfrac{\d H}{\d x}                        & = 0 && \mbox{at} \quad x = 0, \label{ss-new-2}                                                               \\
         H                                              & = 0 && \mbox{at} \quad x = 1, \label{ss-new-3}                                                               \\
 \left(\tfrac{\d H}{\d x}\right)^2 & = D^2 (H^{n-1} \tfrac{\d^3 H}{\d x^3})^\alpha \stackrel{(\ref{ss-new-1})}= D^2 B^{2\alpha} && \mbox{at} \quad x= 1, \label{ss-new-4} \\
\int_0^{1} H(x)\d x                         & = 1,                       &  &  
    \end{align}
\end{subequations}
where we recall that in this case $D$ cannot be set to $1$ by scaling. For $n=1$, we have $\alpha \stackrel{\eqref{alpha-critical}}{=} 1$ and \eqref{ss-new} can be integrated explicitly to the unique solution
\begin{equation}\label{self_exact}
     H_D(x) = \tfrac{B_D D}{2} (1-x^2) + \tfrac{B_D^2}{24} (1-x^2)^2 , \quad B = B_D:=\tfrac{15}{2}\Big(-D +\sqrt{\tfrac45+D^2}\Big).
\end{equation}
In particular,
\begin{equation}\label{conj1}
B_D^2
    \left\{\begin{array}{ll}
        = 45 & \mbox{for $D=0$,}
        \\[1ex]
        \sim 9/D^2       & \mbox{as $D\to \infty$},
    \end{array}\right.
    \quad\mbox{hence}\quad H_D(x) \left\{\begin{array}{ll}
= \frac{15}{8}(1-x^2)^2 & \mbox{for $D=0$}
\\[1ex]
\ch{=} \frac{3}{2}(1-x^2) & \mbox{as $D\to \infty$}.
\end{array}\right.
\end{equation}

\begin{Remark}\label{rem:SH}{\rm We remark that \ch{$H_D$ for $D=0$} coincides with the \emph{Smyth-Hill solution} \cite{SmythHill1988}.
        Further note that the profile \eqref{self_exact}
        was formulated already in \cite[Eq.~(6.1)]{CarrilloToscani2002} and later on in \cite{BelgacemGnannKuehn2016}, without a justification of the contact-angle dynamics, that is, instead of \eqref{ss-new-4} the condition $\tfrac{\d H}{\d x} =$constant at $x=1$ was assumed (cf.~\cite[(2.3)]{BelgacemGnannKuehn2016}).
    }\end{Remark}

For $n>1$, the solution of \eqref{ss-new} is not explicit. However, the situation is analogous:

\begin{Theorem}\label{thm} Let $n\in[1,3)$. For any $D\ge 0$ there exists a unique solution $(B_D,H_D)$ to \eqref{ss-new}. In addition, $B_{\ch{D=0}}>0$, $(B_{\ch{D=0}},H_{\ch{D=0}})$ is the unique solution to \eqref{ss-new} with $D=0$, and
$$
D B_D^\alpha \to - 3 \quad\mbox{and} \quad  H_D(x) \to \tfrac32 (1-x^2) \quad \mbox{ in} \ C([0,1]) \quad \mbox{as} \ D\to +\infty.
$$
\end{Theorem}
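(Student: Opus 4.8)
The plan is to collapse the boundary-value problem \eqref{ss-new} to a single scalar equation by exploiting the scaling invariance $(H,B)\mapsto(\lambda H,\lambda^{n/2}B)$, $\lambda>0$, which leaves \eqref{ss-new-1}--\eqref{ss-new-3} and positivity invariant while acting by $\int_0^1 H\,\d x\mapsto\lambda\int_0^1 H\,\d x$ and $\frac{\d H}{\d x}(1)\mapsto\lambda\frac{\d H}{\d x}(1)$. I would therefore fix $B=1$ and study the one-parameter family of solutions $\Phi_a$ of \eqref{ss-new-1} determined by the Cauchy data $\Phi_a(0)=a>0$, $\Phi_a'(0)=0$ (note that $\Phi_a'''(0)=0$ is then forced by \eqref{ss-new-1} because $\Phi_a(0)>0$), choosing the remaining datum $\Phi_a''(0)$ so that $\Phi_a>0$ on $[0,1)$ and $\Phi_a(1)=0$. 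Writing $m(a):=\int_0^1\Phi_a\,\d x$ and $\beta(a):=-\frac{\d\Phi_a}{\d x}(1)\ge0$ for the mass and the contact angle of $\Phi_a$, the rescaling $H=\lambda\Phi_a$ with $\lambda=m(a)^{-1}$ restores the unit mass and yields $B=m(a)^{-n/2}$, while \eqref{ss-new-4} becomes $\lambda^2\beta(a)^2=D^2\lambda^{n\alpha}$. Since $n\alpha=\tfrac{4n}{n+3}<2$ throughout $n\in[1,3)$, this is equivalent to
\[
\Theta(a):=\beta(a)^2\,m(a)^{\,n\alpha-2}=D^2,
\]
so that solving \eqref{ss-new} amounts to inverting the scalar map $a\mapsto\Theta(a)$, after which $B_D=m(a)^{-n/2}$ and $H_D=m(a)^{-1}\Phi_a$.

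The first technical step is to make this reduction rigorous, i.e.\ to show that for each admissible $a$ there is exactly one value of $\Phi_a''(0)$ producing a positive solution that vanishes precisely at $x=1$, and that $m$ and $\beta$ depend continuously (indeed smoothly) on $a$. The delicate point is the behaviour at the contact line, where \eqref{ss-new-1} reads $\frac{\d^3\Phi}{\d x^3}=x\,\Phi^{1-n}$ and is singular as $\Phi\to0$; here I would establish that admissible profiles have finite slope together with a convergent expansion $\Phi_a(x)=\beta(a)(1-x)+o(1-x)$ as $x\to1^-$. This is consistent for every $n\in[1,3)$ because the correction term scales like $(1-x)^{4-n}$ (with a logarithmic factor at $n=2$), which is subdominant since $4-n>1$. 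This analysis identifies the admissible values of $a$ as an interval $[a_0,\infty)$ with $\beta(a_0)=0$ and $\beta(a)>0$ for $a>a_0$; the profile at $a_0$ is the zero-contact-angle ($D=0$) solution, and $B_0=m(a_0)^{-n/2}>0$ because $m(a_0)<\infty$.

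It then remains to prove that $\Theta$ is a continuous, strictly increasing bijection from $[a_0,\infty)$ onto $[0,\infty)$, mapping $a_0\mapsto0$ and $a\to\infty$ to $+\infty$; this yields existence and uniqueness of $(B_D,H_D)$ for every $D\ge0$. I expect the strict monotonicity of $\Theta$ to be the main obstacle. For $n=1$ everything is explicit: $\beta(a)=2a-\tfrac1{12}$, $m(a)=\tfrac{2a}{3}-\tfrac1{180}$ and $n\alpha-2=-1$, so $a_0=\tfrac1{24}$, $m(a_0)=\tfrac1{45}$ (whence $B_0=\sqrt{45}$, in agreement with \eqref{self_exact}--\eqref{conj1}), and $\Theta(a)=(2a-\tfrac1{12})^2\big/(\tfrac{2a}{3}-\tfrac1{180})$ is manifestly increasing. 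For general $n\in(1,3)$ no closed form is available, and I would instead differentiate \eqref{ss-new-1}--\eqref{ss-new-3} with respect to the shooting parameter and run a comparison/maximum-principle argument on the resulting linear variational problem in order to sign $\beta'(a)$ and $m'(a)$; combined with the fixed sign of the exponent $n\alpha-2<0$, this should force $\Theta'(a)>0$.

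Finally, the asymptotics as $D\to+\infty$ follow from the reduction. Because $\Theta$ is an increasing bijection, $D\to\infty$ corresponds to $a\to\infty$, hence $m(a)\to\infty$ and $\lambda=m(a)^{-1}\to0$, so that $B_D=m(a)^{-n/2}\to0$. I would then derive uniform bounds on the unit-mass profiles $H_D$ (from positivity and the mass constraint, together with the vanishing forcing $B_D^2\to0$ in \eqref{ss-new-1}) and extract a limit; passing to the limit in \eqref{ss-new-1} gives $\frac{\d^3 H}{\d x^3}=0$ in the interior, so the limit is the unique quadratic profile with $\frac{\d H}{\d x}(0)=0$, $H(1)=0$ and unit mass, namely $\tfrac32(1-x^2)$, and the convergence is then upgraded to $C([0,1])$ via the contact-line expansion of the preceding step. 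Since $\tfrac32(1-x^2)$ has slope $-3$ at $x=1$ and \eqref{ss-new-4} forces $\frac{\d H_D}{\d x}(1)=-D\,B_D^\alpha$ for the (decreasing) profile, the contact angle converges as $\frac{\d H_D}{\d x}(1)=-D\,B_D^\alpha\to-3$, which is the asserted limit, together with $H_D\to\tfrac32(1-x^2)$ in $C([0,1])$.
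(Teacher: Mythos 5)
Your reduction is sound in outline and, for $n=1$, your explicit computation does verify every step. For $n\in(1,3)$, however, the proposal has a genuine gap exactly where you flag it: the strict monotonicity of $\Theta(a)=\beta(a)^2\,m(a)^{n\alpha-2}$, on which uniqueness entirely hinges, is only announced (``differentiate with respect to the shooting parameter and run a comparison/maximum-principle argument''), not proven. Note that $\Theta$ is the quotient of $\beta(a)^2$ by $m(a)^{2-n\alpha}$ with $2-n\alpha>0$, and both numerator and denominator are expected to be increasing in $a$; so even if you manage to sign $\beta'(a)$ and $m'(a)$ separately, $\Theta'>0$ does not follow and a quantitative comparison of the two rates is needed --- your own $n=1$ computation already shows that the conclusion comes from a cancellation in $N'M-NM'$, not from the signs of $N'$ and $M'$ alone. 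In addition, the preliminary step --- that for each $a$ there is exactly one value of $\Phi_a''(0)$ producing touchdown precisely at $x=1$ with finite slope, that $a\mapsto(\beta(a),m(a))$ is continuous, that the admissible set is an interval $[a_0,\infty)$, and that $\Theta(a)\to\infty$ as $a\to\infty$ --- is itself a nontrivial shooting problem for a third-order ODE degenerate at touchdown, which you assert rather than establish.

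The paper takes a different and shorter route that circumvents precisely these obstacles: it rescales \eqref{ss-new} onto the family \eqref{sys_bddg} of profiles with prescribed contact angle $\theta$ and unit mass, whose well-posedness and the monotonicity of the touchdown point $y_\theta$ in $\theta$ are imported from \cite{BDDG}, \cite{BPW} and \cite{BelgacemGnannKuehn2016}; uniqueness then reduces to intersecting the decreasing map $B\mapsto y_{\theta_B}$ with the increasing map $B\mapsto B^{2/(n+4)}$. If you wish to keep your parametrization by the centre height $a$, you must either reprove an analogue of that monotonicity or translate it into your variables; as it stands, the crux of the theorem is left open for $n>1$. Your $D\to\infty$ argument is essentially the same as the paper's, and your closing observation that the correct form of the limit is $\tfrac{\d H_D}{\d x}(1)=-DB_D^\alpha\to-3$ is right (it in fact cleans up a sign in the theorem as stated).
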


\begin{Remark}{\rm
Theorem \ref{thm} shows that for large, \ch{respectively} small, contact-line frictional coefficients the evolution is controlled by the contact-line frictional law, \ch{respectively} the complete wetting regime. In addition, since $s(t)\sim (\gamma^{-1} B_D^2 t)^{\gamma}$ and $B_D\to 0$ as $D\to +\infty$, as $D\to +\infty$ solutions approach a quasi-stationary interface shape with respect to the time-scale $t^\gamma$. The behavior of \ch{$H_D$} for varying $D$ is shown in Fig.~\ref{fig:exact_n1_n2}.
}\end{Remark}

\begin{figure}[h]
\centering
\includegraphics[width=\textwidth]{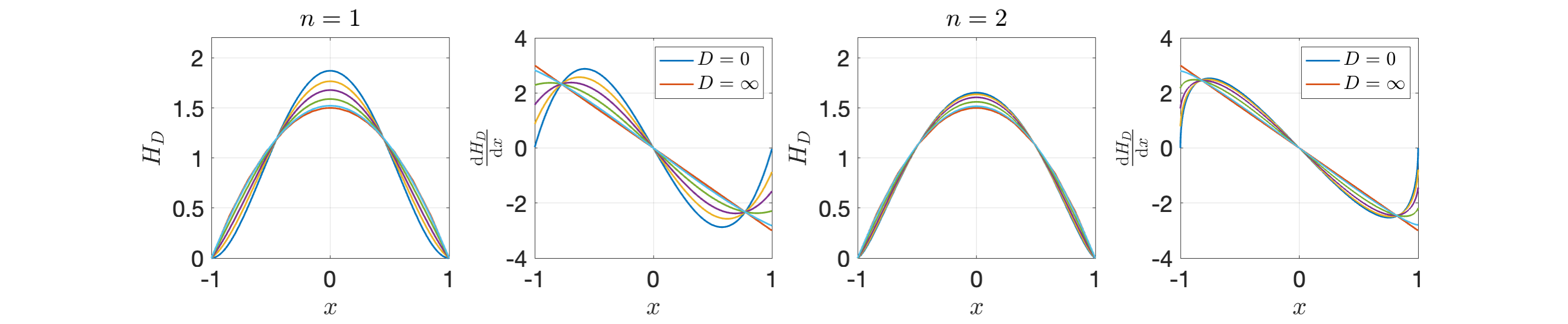}
\caption{Exact self-similar solutions $H_D(x)\equiv H(x)$ \ch{of the ODE system \eqref{ss-new}} and first derivative
\ch{$\frac{\d H_D}{\d x}$} for $n=1$ (first two panels) and $n=2$ (last two panels) with $\alpha=4/(n+3)$ shown for various friction coefficients $0\le D\le\infty$ ($D = 0$ with $\frac{\d H_D}{\d x} = 0$ at $x = \pm 1$; $D \to + \infty$ with linear $\frac{\d H_D}{\d x}$). \ch{Numerical solutions are obtained by shooting method.}}
\label{fig:exact_n1_n2}
\end{figure}

We conclude the \ch{s}ection with the proof of Theorem \ref{thm}.

\begin{proof}[Proof of Theorem \ref{thm}]
We rescale \eqref{ss-new} as follows:
\begin{equation}\label{back}
\hat x = \hat x_B x \quad \mbox{and} \quad \hat H = \hat x_B^{-1} H, \quad \hat x_B := B^{\frac{2}{n+4}},
\end{equation}
so that $(B,H)$ solves \eqref{ss-new} if and only if $(B,\hat H)$ solves
\begin{subequations}\label{ss-new-scaled}
\begin{align}
\hat H^{n-1} \tfrac{\d^3 \hat H}{\d \hat x^3} &= \hat x && \mbox{for} \quad \hat x \in (0,\hat x_B), \\
\tfrac{\d \hat H}{\d \hat x} &= 0 && \mbox{at} \quad \hat x = 0, \\
(\hat H,\tfrac{\d \hat H}{\d \hat x}) &= (0,-DB^{\alpha-\frac{4}{n+4}})\stackrel{\eqref{alpha-critical}}= (0,-DB^\frac{4}{(n+3)(n+4)} ) && \mbox{at} \quad \hat x = \hat x_B, \\
\int_0^{\hat x_B} u \, \d \hat x &= 1.
\end{align}
\end{subequations}
In \cite[Theorem 2.1]{BDDG}, and in \cite[Theorem~1.2]{BPW} for $D=0$ (up to a rescaling), it is shown that the system
\begin{subequations}\label{sys_bddg}
\begin{align}
u^{n-1} \tfrac{\d^3 u}{\d y^3} &= y && \mbox{for} \quad y \in (0,y_\theta), \\
\tfrac{\d u}{\d y} &= 0 && \mbox{at} \quad y = 0, \\
(u,\tfrac{\d u}{\d y}) &= (0,-\theta) && \mbox{at} \quad y = y_\theta, \label{sys_bddg_34}\\
\int_0^{y_\theta} u \, \d y &= 1.
\end{align}
\end{subequations}
has for any $\theta \ge 0$ a unique solution $(y,u) = (y_\theta, u_\theta)$
(see~\cite[Theorem~3.2 and Theorem~3.3]{BelgacemGnannKuehn2016}
for an alternative proof of existence and uniqueness). Furthermore, $y_\theta$ is a decreasing function of $\theta$ with $0 < y_0 < \infty$ and $y_\infty = 0$. Note that the proof in \cite{BDDG} applies to inhomogeneous mobilities, but carries over to our situation without any change of the reasoning.

If $D=0$, \eqref{ss-new-scaled} coincides with \eqref{sys_bddg} with $\theta=0$, hence Theorem~\ref{thm} holds with $(\hat x_B,\hat H)=(y_0,u)$. If $D>0$, for any $B>0$ let
$$
\theta_B := DB^\frac{4}{(n+3)(n+4)}
$$
and let $(y_B, u_B)$ be the unique solution to \eqref{sys_bddg} with $\theta=\theta_B$. Since $B\mapsto \theta_B$ is increasing, in view of the above, $B\mapsto y_B$ is decreasing from $y_0$ to $y_\infty = 0$: hence there exists a unique $B$ (whence a unique $\theta$) such that $y_B=\hat x_B=B^\frac{2}{n+4}$. This proves the well-posedness of \eqref{ss-new}.

It remains to consider the limit as $D\to +\infty$. Let $\theta_D$, $B_D$, and $y_D=\hat x_D$ be the unique constants identified above. By construction, they are related by
$$
\theta_D= D B_D^\frac{4}{(n+3)(n+4)}\quad\mbox{and}\quad  y_D=B_D^\frac{2}{n+4}.
$$
If by contradiction $\theta_D\to \overline \theta\in [0,+\infty)$ for a subsequence $D\to +\infty$ (not relabeled), then on one hand $B_D\to 0$, hence $y_D\to 0$; on the other hand, \eqref{sys_bddg} would imply that $y_D\to \overline y\in (0,y_0]$, a contradiction. Therefore $\theta_D\to +\infty$, hence $y_D=B_D^{\frac{2}{n+4}}\to 0$ as $D\to +\infty$. 
Hence, in the limit $D \to + \infty$, it follows from  \eqref{ss-new} because of $B_D \to 0$ that \ch{$H_D(x)$} converges uniformly in $[0,1]$ to the unique solution of
$$
\tfrac{\d^{3} H}{\d x^{3}} =0 \ \mbox{ in $(0,1)$}, \quad H(1)=\tfrac{\d H}{\d x}(0) =0,\quad \int_0^1 H(x)\d x =\ch{1}.
$$
Hence, $DB_D^{\alpha}\to -3$ and $\ch{H_D(x)}\to \tfrac32(1-x^2)$ \ch{as $D\to\infty$}.
\end{proof}

\section{Quasi-self-similar solutions}\label{s:app_2}
\subsection{Scaling and ansatz}
As we have just seen, only the case $\alpha=\frac{4}{n+3}$ yields exact self-similar solutions to \eqref{tfe_classical_fixed}. For $\alpha\ne \frac{4}{n+3}$, we may nevertheless \ch{consider} quasi-self-similar solutions, which are expected to describe the large-time asymptotics of generic ones. \ch{As mentioned in \S \ref{ss:goals},  this method has been applied to thin-film equations for related asymptotic studies \cite{AG1,BDDG}}. Quasi-self-similar solutions are characterized by ignoring the explicit dependence of $H$ on time in \eqref{tfe_classical_fixed}, that is, the term $\partial_t H$ is dropped while the dependence on time through $s$ is retained. We may then integrate \eqref{tfe_classical_1_fixed} using the boundary conditions \eqref{tfe_classical_2_fixed}, \eqref{tfe_classical_4_fixed}, and $\verti{\dot s} < \infty$.
Recalling also \eqref{tfe_classical_34_fixed}, this yields
\begin{subequations}\label{app_tfe}
    \begin{align}
     H^{n-1} \partial_x^3 H & =  s^{n+3} \dot s x                                &  & \mbox{for} \quad x \in (0,1), \label{app_tfe_1} \\
        H                                             & = 0                                    &  & \mbox{at} \quad x = 1, \label{app_tfe_2}           \\
         (\partial_x H)^2  & =s^{4-\alpha(n+3)} (H^{n-1}\partial_x^3 H)^\alpha &  & \mbox{at} \quad x = 1, \label{app_tfe_34}           \\
        \partial_x H & = 0                        &  & \mbox{at} \quad x = 0, \label{app_tfe_5}
        \\
\int_0^1 H\d x & =1, & & \label{app_tfe_mass}
    \end{align}
\end{subequations}
where in view of \eqref{D=1} we have assumed without loss of generality $D=1$.
The free boundary condition \eqref{app_tfe_34} suggests that for $t\gg 1$, i.e. $s\gg 1$, the solution is

\smallskip

$\bullet$ dominated by contact-line friction if $\alpha<\frac{4}{n+3}$ (strong contact-line friction);

$\bullet$  a perturbation of the complete wetting solution if $\alpha>\frac{4}{n+3}$ (weak contact-line friction).

\smallskip

In the next two sections we will show that this is indeed the case. To this aim, we will use the asymptotic expansion
\begin{equation}\label{ansatz_self_n1_strong}
    s^{n+4}(t) = s_0^{n+4}+s_1^{n+4}(1+o(1)) , \qquad H(t,x)= H_0(x) + \omega H_1(x) +O(\omega^2),
\end{equation}
with $s_0(t)\gg 1$ and $s_1(t)\ll s_0(t)$ for $t\gg 1$, and $\omega=\omega(s)\ll 1$ as $s\to \infty$. Under the expansion \eqref{ansatz_self_n1_strong}, we obviously have from \eqref{app_tfe_2} and \eqref{app_tfe_5} that
\begin{equation}\label{bc_h0h1_n1}
        H_0(1) = 0, \qquad \tfrac{\d H_0}{\d x}(0)=0, \qquad
        H_1(1) = 0, \qquad \tfrac{\d H_1}{\d x}(0)=0,
\end{equation}
and from \eqref{app_tfe_mass} that
\begin{equation}\label{mass_n1}
        \int_0^1 H_0 \ \d x  = 1, \qquad
        \int_0^1 H_1 \ \d x  = 0.
\end{equation}
On the other hand, the leading order term in the contact-line condition \eqref{app_tfe_34} depends on the sign of $\alpha-\frac{4}{n+3}$. This motivates distinguishing the two cases.

\subsection{The case $\alpha < \frac{4}{n+3}$: strong contact-line friction}\label{qss:strong}

Since $\alpha<\frac{4}{n+3}$ and $s_0\gg 1$, using \eqref{ansatz_self_n1_strong}, at leading order the bulk equation \eqref{app_tfe_1} and the contact-line condition \eqref{app_tfe_34} \ch{are}
\begin{equation}\label{re_lead_gen_n_strong}
H_0^{n-1} \tfrac{\d^3 H_0}{\d x^3} =  s_0^{n+3}\dot s_0 x \quad \mbox{for} \ x \in (0,1) \qquad\mbox{and}\qquad   H_0^{n-1} \tfrac{\d^3 H_0}{\d x^3} = 0 \quad \mbox{at} \ x = 1,
\end{equation}
respectively. The two equations are obviously incompatible, pointing to the necessity of considering the correction $\omega H_1$. Therefore,
\begin{align}\label{bulk_lead_gen_n_strong}
    \tfrac{\d^3 H_0}{\d x^3} &= 0 && \mbox{in} \quad (0,1)
\end{align}
and the leading-order terms in \eqref{app_tfe_1} and \eqref{app_tfe_34} are, respectively,
\begin{align}
&\omega \big(H_0^{n-1}\tfrac{\d^3 H_1}{\d x^3} + (n-1) H_0^{n-2} \tfrac{\d^3 H_0}{\d x^3} H_1\big) =  s_0^{n+3}\dot s_0 x && \mbox{for} \quad x \in (0,1), \label{re_next_n1} \\
&\big(\tfrac{\d H_0}{\d x}\big)^2 = s_0^{4 - \alpha (n+3)} \omega^\alpha \big(H_0^{n-1}\tfrac{\d^3 H_1}{\d x^3} + (n-1) H_0^{n-2} \tfrac{\d^3 H_0}{\d x^3} H_1\big)^\alpha && \mbox{at} \quad x=1. \label{re_next_gen_n_strong}
\end{align}
Combining \eqref{bc_h0h1_n1}, \eqref{mass_n1}, and \eqref{bulk_lead_gen_n_strong} yields $n$-independently the unique solution
\begin{align}\label{h0_gen_n_strong}
    H_0 &= \tfrac 3 2 (1-x^2) && \mbox{for} \quad x \in [-1,1].
\end{align}
On the other hand, separation of variables in \eqref{re_next_n1} yields with \eqref{bulk_lead_gen_n_strong} and \eqref{h0_gen_n_strong},
\begin{align}
\tfrac{\d^3 H_1}{\d x^3} &= x \, (1-x^2)^{1-n} && \mbox{in} \quad (0,1), \label{h1_gen_n_strong} \\
\omega &= \big(\tfrac23\big)^{n-1} s_0^{n+3}\dot s_0, \label{ome_gen_n}
\end{align}
where, since the expansion \eqref{ansatz_self_n1_strong} only depends on the product $\omega H_1$, we were free to choose the normalization factor of $\omega$. Now, we use
\begin{eqnarray*}
0 &\stackrel{\eqref{mass_n1}}{=}& \int_0^1 \big(\tfrac{\d x}{\d x}\big) H_1 \, \d x = 1 \cdot H_1(1) - 0 \cdot H_1(0) - \int_0^1 x \, \tfrac{\d H_1}{\d x} \, \d x \\
&\stackrel{\eqref{bc_h0h1_n1}}{=}& - \tfrac 1 2 \int_0^1 \big(\tfrac{\d}{\d x} (x^2-1)\big) \tfrac{\d H_1}{\d x} \, \d x = - 0 \cdot \tfrac{\d H_1}{\d x}(1) + \tfrac 1 2 \tfrac{\d H_1}{\d x}(0) + \tfrac 1 2 \int_0^1 (x^2-1) \, \tfrac{\d^2 H_1}{\d x^2} \, \d x \\
&\stackrel{\eqref{bc_h0h1_n1}}{=}& \tfrac 1 6 \int_0^1 \big(\tfrac{\d}{\d x} (x^3 - 3 x + 2)\big) \tfrac{\d^2 H_1}{\d x^2} \, \d x \\
&=& 0 \cdot \tfrac{\d^2 H_1}{\d x^2}(1) - \tfrac 1 3 \tfrac{\d^2 H_1}{\d x^2}(0) - \tfrac 1 6 \int_0^1 (x^3-3x+2) \, \tfrac{\d^3 H_1}{\d x^3} \, \d x,
\end{eqnarray*}
giving with \eqref{h1_gen_n_strong},
\begin{equation}\label{cond_d2h1_gen_n}
\tfrac{\d^2 H_1}{\d x^2}(0) = - C_1, \qquad C_1 := \tfrac12 \int_0^1 x (x^3-3x+2) (1-x^2)^{1-n} \, \d x.
\end{equation}
Thus we can integrate \eqref{h1_gen_n_strong} as follows:
\begin{eqnarray*}
\tfrac{\d^2 H_1}{\d x^2}(x) &\stackrel{\eqref{h1_gen_n_strong}, \eqref{cond_d2h1_gen_n}}{=}&
-C_1+\int_0^x x_1 \, (1-x_1^2)^{1-n} \, \d x_1, \\
\tfrac{\d H_1}{\d x}(x) &\stackrel{\eqref{bc_h0h1_n1}}{=}&
-C_1 x +\int_0^x x_1 \, (x-x_1) \, (1-x_1^2)^{1-n} \, \d x_1,
\end{eqnarray*}
so that with \eqref{bc_h0h1_n1},
\begin{align}
    H_1(x) &= \tfrac{C_1}{2} (1-x^2) - \int_x^1 \int_0^{x_1} x_2 \, (x_1-x_2) \, (1-x_2^2)^{1-n} \, \d x_2 \, \d x_1 && \mbox{for} \quad x \in [-1,1]. \label{h1_gen_n_strong_sol}
\end{align}
For $n = 1$ we have $C_1=\frac1{10}$ and this solution reads
\begin{align}\label{h1_n1_strong_sol}
    H_1(x) &= \tfrac1{20} (1-x^2) -\tfrac1{24}(1-x^4) =\tfrac{1}{120}(1-x^2)(1-5x^2) && \mbox{for} \quad x \in [-1,1].
\end{align}
For $n=2$ we get $C_1=\tfrac{1}{6}(5-\log 2)$ and the correction
\begin{align}
\label{h1_n2_strong_sol}
H_1(x)=\tfrac{C_1}{2}(1-x^2)-\tfrac14(3-3x^2-2\log 4+(x-1)^2\log(1-x)+(x+1)^2\log(1+x)).
\end{align}
Next, we compute $s_0$ and $\omega$. With help of \eqref{h0_gen_n_strong} and \eqref{h1_gen_n_strong}, \eqref{re_next_gen_n_strong} takes the form
\begin{equation}\label{re_next_n1_strong_bis}
3^2  =s_0^{4 - \alpha (n+3)} \omega^\alpha \left(\tfrac32\right)^{\alpha(n-1)} \quad\iff\quad \omega= 3^\frac{2}{\alpha} \left(\tfrac23\right)^{n-1} s_0^{n+3 -\frac{4}{\alpha}}.
\end{equation}
Thus we obtain
\[
\dot s_0 \stackrel{\eqref{ome_gen_n}}= \left(\tfrac32\right)^{n-1} \omega s_0^{-n-3} \stackrel{(\ref{re_next_n1_strong_bis})}= 3^{\frac2\alpha} s_0^{-\frac{4}{\alpha}},
\]
so that by normalizing $s_0(0) = 0$ through a time shift,
\begin{equation}\label{s0_gen_n}
s_0(t)= 3^{\frac{1-\gamma}{2}}\gamma^{-\gamma}t^\gamma , \qquad \gamma=\tfrac{\alpha}{\alpha+4}<\tfrac{1}{n+4}
\end{equation}
because $\alpha < \frac{4}{n+3}$, and consequently
\begin{equation}\label{om_gen_n_strong_2}
\omega(t) \stackrel{(\ref{re_next_n1_strong_bis})}
= 3^\frac{1-\gamma}{2\gamma}\left(\tfrac23\right)^{n-1} (s_0(t))^{-\frac{1-\gamma (n+4)}{\gamma}}= \gamma^{1-\gamma(n+4)} 2^{n-1} 3^\frac{6-n-(n+4)\gamma}{2}
t^{-(1-\gamma(n+4))}.
\end{equation}
Finally, the correction $s_1$ \ch{is} obtained from the next-to-leading order terms in equation \eqref{app_tfe_1}:
\begin{equation}\label{s1_gen_n}
s_1^{n+3} \dot s_1 = O\big(\omega^2\big) \stackrel{\eqref{om_gen_n_strong_2}}{=} O\big(t^{-2(1-\gamma(n+4))}\big) \quad \Leftrightarrow \quad s_1^{n+4} =\left\{\begin{array}{ll} O(t^{2(n+4)\gamma-1}) & \mbox{ if } \gamma\ne \frac1{2 (n+4)}, \\ O(\log t) & \mbox{ if } \gamma= \frac1{2(n+4)}.\end{array}\right.
\end{equation}
Since
\begin{equation}\label{exps}
s =(s_0^{n+4} + s_1^{n+4}(1+o(1)))^{\frac1{n+4}} = s_0\big(1+O(s_1^{n+4} s_0^{-n-4})\big),
\end{equation}
we have
\begin{equation}\label{s1.gen_n.s.f}
s(t) \stackrel{\eqref{s0_gen_n}, \eqref{s1_gen_n}}{=} 3^{\frac{1-\gamma}{2}} \gamma^{-\gamma} t^\gamma \begin{cases} \big(1+O(t^{-(1-\gamma(n+4))})\big) & \text{ if } \gamma \ne \frac{1}{2(n+4)}, \\ \big(1+O(t^{- \frac 1 2} \log t)\big) & \text{ if } \gamma = \frac{1}{2(n+4)}.\end{cases}
\end{equation}
The combination of \eqref{h0_gen_n_strong}, \eqref{h1_gen_n_strong_sol}, and \eqref{om_gen_n_strong_2} in \eqref{ansatz_self_n1_strong} yields at leading order as $s \to \infty$ (or equivalently $t\to \infty$) the quasi-self-similar profile $H$ according to
\begin{align} \nonumber
H(t,x) &= \tfrac32 (1-x^2) + C_2  \, t^{-(1-\gamma(n+4))}H_1(x) + O\big(t^{-2(1-\gamma(n+4))}\big) && \mbox{for} \quad x \in [-1,1],
\\
C_2 &= \gamma^{1-\gamma(n+4)} \, 2^{n-1} \, 3^\frac{6-n-(n+4)\gamma}{2} \label{def-C2}
\end{align}
where $\gamma = \frac{\alpha}{\alpha+4}$ and $H_1$ is defined in \eqref{cond_d2h1_gen_n}-\eqref{h1_gen_n_strong_sol}. For $n = 1$ this equation reduces to
\begin{equation}\label{strong-alt-s}
H(t,x) = \tfrac32 (1-x^2)
+ \tfrac{3^\frac{5(1-\gamma)}{2} \gamma^{1-5\gamma}}{120}\, t^{-(1-5\gamma)} \, (1-x^2)(1-5x^2)
+ O\big(t^{-2(1-5\gamma)}\big)
\end{equation}
for $x \in [-1,1]$.

\subsection{The case $\alpha>\frac{4}{n+3}$: weak contact-line friction}\label{qss:weak}

On assuming $\alpha > \frac{4}{n+3}$, separation of variables in \eqref{app_tfe_1} yields, at leading order for $s_0\gg 1$,
\begin{equation}\label{s_ode_gen_n_weak}
\dot s_0 = B_0^2 s_0^{-(n+3)}
\end{equation}
and
\begin{align}\label{bulk_lead_gen_n_weak}
H_0^{n-1} \tfrac{\d^3 H_0}{\d x^3} &= B^2_0 x && \mbox{in} \quad (0,1)
\end{align}
for some unknown constant $B_0 > 0$, whilst the condition \eqref{app_tfe_34} at the contact line yields
\begin{align}
    \tfrac{\d H_0}{\d x} &= 0 && \mbox{at} \quad x = 1. \label{re_gen_n_weak_1}
\end{align}
The system for $H_0$ is complemented with the boundary conditions \eqref{bc_h0h1_n1} and the mass constraint \eqref{mass_n1}, hence it coincides with \eqref{ss-new} with $D=0$. Therefore
\begin{equation}\label{def-BH0}
\mbox{$(B_0,H_0)$ is the unique solution to \eqref{ss-new} with $D=0$},
\end{equation}
(cf.~Theorem~\ref{thm} and \cite[Theorem~1.2]{BPW}), \ch{i.e. $H_0=H_{D=0}$} is the unique exact self-similar solution of \eqref{TFE-n} with zero contact angle and unit mass.
Integrating \eqref{s_ode_gen_n_weak} \ch{with normalization} $s_0(0) = 0$ then yields
\begin{equation}\label{qwe2_gen_n}
s_0(t) = \big((n+4) B_0^2 t\big)^{\frac{1}{n+4}}.
\end{equation}

\begin{Remark}{\rm
It is to be noted that the leading-order expansion given by $s_0$ and $H_0$ is self-consistent. This is in contrast to the case of strong contact-line friction, where the leading-order expansions in the bulk and at the contact line were incompatible with each other (cf.~\eqref{re_lead_gen_n_strong}). However, since we are interested in quantifying the correction coming from the contact-line frictional law, we shall be looking at $\omega H_1$ and $s_1$ in this case, too.
}\end{Remark}

At next-to-leading order for $s_0\gg 1$, the contact-line condition \eqref{app_tfe_34} implies
\begin{align}
    \tfrac{\d H_1}{\d x} & = - \big(H_0^{n-1} \tfrac{\d^3 H_0}{\d x^3}\big)^{\frac \alpha 2} \ \stackrel{\eqref{bulk_lead_gen_n_weak}}= - B_0^{-\alpha} && \mbox{at} \quad x = 1,  \label{re_gen_n_weak_2} \\
    \omega               & = s_0^{-\frac{\alpha(n+3)-4}{2}} \ \stackrel{(\ref{qwe2_gen_n})}= \big((n+4) B_0^2 t\big)^{- \frac{\alpha (n+3)-4}{2(n+4)}},\label{om_0_gen_n_weak}
\end{align}
where we have normalized $\omega$ conveniently since only the product $\omega H_1$ enters the expansion in \eqref{ansatz_self_n1_strong}. Furthermore, we have used that $\omega \tfrac{\d H_1}{\d x}$ has to have (strictly) negative sign at $x=1$ for the expansion to make sense around $x=1$ and be nontrivial.
In the bulk, the next-to-leading order terms in \eqref{app_tfe_1},
\begin{align*}
\omega \big((n-1) H_0^{n-2} \tfrac{\d^3 H_0}{\d x^3} H_1 + H_0^{n-1} \tfrac{\d^3 H_1}{\d x^3}\big) &= s_1^{n+3} \dot s_1 x && \mbox{for} \quad x \in (0,1),
\end{align*}
yield by separation of variables with \eqref{bulk_lead_gen_n_weak}
\begin{equation}
\label{s1_gen_n_w}
\omega = - C_2^{-1} B_0^{\frac{2(1-n)}{n}} s_1^{n+3} \dot s_1
\end{equation}
and
\begin{align}
\label{h1_gen_n_w}
(n-1) x H_1 + f^n \tfrac{\d^3 H_1}{\d x^3} &= - C_2 \, x f && \mbox{in} \quad (0,1)
\end{align}
for some $C_2 > 0$ without loss of generality (since the expansion only depends on $\omega H_1$), where we have defined
\begin{equation}\label{h0_c1_gen_n_weak}
f= B_0^{-\frac 2 n} H_0.
\end{equation}
\ch{Integrating} \eqref{s1_gen_n_w} using \eqref{om_0_gen_n_weak} \ch{we obtain} 
\[
\tfrac{\d s_1^{n+4}}{\d t} = O(\omega) = O\big( t^{\frac{4-\alpha (n+3)}{2(n+4)}}\big) \quad \Rightarrow\quad s_1^{n+4}=\left\{\begin{array}{ll} O\big(t^{\frac{2(n+6)-\alpha(n+3)}{2(n+4)}}\big) & \mbox{if } \ \alpha\ne 2 \frac{n+6}{n+3}, \\ O(\log t) & \mbox{if }\ \alpha=2 \frac{n+6}{n+3},
\end{array}\right.
\]
%
\ch{and} with \eqref{exps} and \eqref{qwe2_gen_n} \ch{this produces}
\begin{equation}\label{bulk1.1_gen_n}
s(t)=  \big((n+4) B_0^2 t\big)^{\frac{1}{n+4}} \begin{cases} \big(1+O\big(t^{- \frac{\alpha (n+3) - 4}{2(n+4)}}\big)\big) & \text{if } \alpha \ne 2 \frac{n+6}{n+3}, \\ \left(1+O(t^{-1} \log t)\right) & \text{if } \alpha = 2 \frac{n+6}{n+3}.\end{cases}
\end{equation}
Next, we turn our attention to determining $H_1$ and $C_2$. We distinguish three cases.

\bigskip

{\boldmath{$n=1$.}} For $n = 1$,  an explicit integration gives (cf. \eqref{self_exact}, \eqref{conj1}, and \eqref{h0_c1_gen_n_weak})
\begin{equation}\label{h0_n1_weak}
f(x) = \tfrac{1}{24} (1-x^2)^2, \quad
B_0^2 = 45 \quad \mbox{and} \quad H_0 = \tfrac{15}{8} (1-x^2)^2 \quad \mbox{for} \quad x \in [-1,1],
\end{equation}
whence $s_0(t) \stackrel{\eqref{qwe2_gen_n}}{=} (225 t)^{\frac 1 5}$, and \eqref{om_0_gen_n_weak} and \eqref{bulk1.1_gen_n} reduce to
\begin{equation}\label{bulk1.1}
\omega = (225 t)^{- \frac{2 (\alpha-1)}{5}}, \qquad s(t)=  (225\, t)^\frac15 \begin{cases} \big(1+O\big(t^{\frac{-2(\alpha-1)}{5}}\big)\big) & \text{if } \alpha \ne \frac 7 2, \\ \big(1+O(t^{-1} \log t)\big) & \text{if } \alpha = \frac 7 2.\end{cases}
\end{equation}
We integrate \eqref{h1_gen_n_w} 
using the four conditions \eqref{bc_h0h1_n1}, \eqref{mass_n1}, and \eqref{re_gen_n_weak_2}, leading to $C_2= (45)^{\frac\alpha 2}$ and
\begin{equation}\label{def-H1-w}
H_1=  \tfrac{(45)^{\frac\alpha 2}}{6} \big(\tfrac{1}{5}(1-x^2)-\tfrac{1}{4} (1-x^2)^2\big)=-\tfrac{(45)^{\frac\alpha 2}}{120}(1-x^2)(1-5x^2).
\end{equation}
Collecting \eqref{h0_n1_weak}, \eqref{bulk1.1} and \eqref{def-H1-w} in \eqref{ansatz_self_n1_strong}, at leading order as $s \to \infty$ (or equivalently $t\to \infty$)
the approximate position of the free boundary is given by \eqref{bulk1.1}, and the quasi-self-similar profile $H$ is given by
\[
H(t,x) = \tfrac{15}{8} (1-x^2)^2 - \tfrac18 3^{\frac{\alpha-1}{5}} 5^{- \frac{3\alpha+2}{10}}  t^{- \frac{2 (\alpha-1)}{5}} (1-x^2) (1-5x^2) + O\big(t^{-\frac{4(\alpha-1)}{5}}\big).
\]
\bigskip

{\boldmath{$n\in (1,\frac32)$.}} We now consider $n > 1$. Defining $g$ as
\begin{equation}\label{decomp_h1_fg}
H_1 = \ch{-} \tfrac{C_2}{n} f + g,
\end{equation}
we get from \eqref{h1_gen_n_w}
\begin{subequations}\label{problem_g}
\begin{align}\label{ode_g}
(n-1) x g + f^n \tfrac{\d^3 g}{\d x^3} &= 0 && \mbox{for} \quad x \in (0,1),
\end{align}
and the boundary conditions \eqref{bc_h0h1_n1} and \eqref{re_gen_n_weak_2} transform into
\begin{align}
g &= 0 && \mbox{at} \quad x = 1, \label{bc_g_x1}\\
\tfrac{\d g}{\d x} &= - B_0^{\alpha} && \mbox{at} \quad x = 1, \label{bc_dg_x1}\\
\tfrac{\d g}{\d x} &= 0 && \mbox{at} \quad x = 0.
\end{align}
\end{subequations}
Once $g$ is determined, the mass constraint \eqref{mass_n1} leads to
\begin{equation}\label{c2_gen_n_weak}
C_2 = - \frac{n \int_0^1 g(x) \, \d x}{\int_0^1 f(x) \, \d x}.
\end{equation}
In order to determine solvability of \eqref{problem_g}, we use \cite[Theorem~1.3]{BPW}, that is,
\begin{equation}\label{asym_f}
f = \begin{cases} C_3 (1-x)^2 (1+o(1)) & \text{for } 0 < n < \frac 3 2, \\
C_4 (1-x)^2 \left(-\log(1-x)\right)^{\frac 2 3} (1+o(1)) & \text{for } n = \frac 3 2, \\
C_5 (1-x)^{\frac 3 n} (1+o(1)) & \text{for } \frac 3 2 < n < 3, \end{cases} \qquad \mbox{as $x \nearrow 1$,}
\end{equation}
where $C_3, C_4, C_5 > 0$ only depend on $n$. The asymptotics \eqref{asym_f} imply that \eqref{problem_g} has no solution for $n \in \left[\frac 3 2,3\right)$: indeed, for $n \in (\frac 3 2, 3)$ from \eqref{ode_g}\ch{-}\eqref{bc_dg_x1} and \eqref{asym_f} we infer
\[
\tfrac{\d^3 g}{\d x^3} = \begin{cases} -\tfrac12 B_0^{\alpha} C_4^{-\frac32} \,  (1-x)^{-2} (-\log(1-x))^{-1} & \text{for } n = \frac 3 2,
\\[1ex]
-(n-1)B_0^{\alpha} C_5^{-n} (1-x)^{-2} (1+o(1)) & \text{for } n \in (\frac 3 2,3)\ch{,} \end{cases} \quad \mbox{as $x \nearrow 1$},
\]
in contradiction  with \eqref{bc_dg_x1}.

For $n \in \left[1,\frac 3 2\right)$, we apply a further splitting according to
\begin{equation}\label{decomp_g_vw}
g = \tfrac 1 2 B_0^{\alpha} (1-x^2) + v + w,
\end{equation}
where
\begin{subequations}\label{problem_v}
\begin{align}
f^n \tfrac{\d^3 v}{\d x^3} &= \tfrac{1-n}{2} B_0^{\alpha} x \, (1-x^2) && \mbox{for} \quad x \in (0,1), \label{ode_v} \\
v &= \tfrac{\d v}{\d x} = 0 && \mbox{at} \quad x = 1, \label{bc_v_1} \\
\tfrac{\d v}{\d x} &= 0 && \mbox{at} \quad x = 0, \label{bc_v_2}
\end{align}
\end{subequations}
and
\begin{subequations}\label{problem_w}
\begin{align}
\LL w := (n-1) \, x \, w + f^n \tfrac{\d^3 w}{\d x^3} &= (1-n) \, x \, v && \mbox{for} \quad x \in (0,1), \label{ode_w} \\
w &= \tfrac{\d w}{\d x} = 0 && \mbox{at} \quad x = 1, \label{bc_w_1} \\
\tfrac{\d w}{\d x} &= 0 && \mbox{at} \quad x = 0. \label{bc_w_2}
\end{align}
\end{subequations}
We first construct a solution to \eqref{problem_v}. We have
\begin{align*}
\tfrac{\d^2 v}{\d x^2} \ \ &\stackrel{\mathclap{\eqref{ode_v}}}{=} \ \ - \tfrac{n-1}{2} B_0^{\alpha} \int_0^x \frac{x_1 (1-x_1^2)}{(f(x_1))^n} \, \d x_1 + C_6, \\
\tfrac{\d v}{\d x} \ \ &\stackrel{\mathclap{\eqref{bc_v_2}}}{=} \ \ - \tfrac{n-1}{2} B_0^{\alpha} \int_0^x \int_0^{x_1} \frac{x_2 (1-x_2^2)}{(f(x_2))^n} \, \d x_2 \, \d x_1 + C_6 x \\
&= - \tfrac{n-1}{2} B_0^{\alpha} \int_0^x \frac{x_2 (x-x_2) (1-x_1^2)}{(f(x_2))^n} \, \d x_2 + C_6 x, \\
C_6 \ \ &\stackrel{\mathclap{\eqref{bc_v_1}}}{=} \ \ \tfrac{n-1}{2} B_0^{\alpha} \int_0^1 \frac{x (1-x)^2 (1+x)}{(f(x))^n} \, \d x,
\end{align*}
so that
\begin{align}
v \stackrel{\eqref{bc_v_1}}{=} \tfrac{n-1}{2} B_0^{\alpha} \int_x^1 \int_0^{x_1} \frac{x_2 (x-x_2) (1-x_1^2)}{(f(x_2))^n} \, \d x_2 \, \d x_1 - \tfrac{C_6}{2} (1-x^2). \label{sol_v}
\end{align}
%
\ch{Notably}, in view of \eqref{asym_f}, \eqref{sol_v} only yields a well-defined solution for $n \ch{\in [1,\frac 3 2)}$. Lastly, in order to find the solution of \eqref{problem_w}, we use
\begin{align*}
\int_0^1 x \, f^{-n} \, w \, (\LL w) \, \d x \qquad & = \qquad (n-1) \int_0^1 x^2 \, f^{-n} \, w^2 \, \d x + \int_0^1 x \, w \, \tfrac{\d^3 w}{\d x^3} \, \d x
\\ &
 \stackrel{\mathclap{\eqref{bc_w_1}}}{=} \qquad (n-1) \int_0^1 x^2 \, f^{-n} \, w^2 \, \d x - \int_0^1 w \, \tfrac{\d^2 w}{\d x^2} \, \d x - \tfrac 1 2 \int_0^1 x \, \tfrac{\d}{\d x} \big(\tfrac{\d w}{\d x}\big)^2 \d x
 \\ & \stackrel{\mathclap{\eqref{bc_w_1}, \eqref{bc_w_2}}}{=} \qquad (n-1) \int_0^1 x^2 \, f^{-n} \, w^2 \, \d x + \tfrac 3 2 \int_0^1 \big(\tfrac{\d w}{\d x}\big)^2 \d x > 0 \quad\mbox{for all} \quad w\not \equiv 0.
\end{align*}
This proves coercivity for $n \in \left[1,\frac 3 2\right)$, so that the Lax-Milgram theorem yields existence of a unique solution to \eqref{problem_w}. Together with \eqref{sol_v}, this yields existence of a unique solution $g$ to \eqref{problem_g}, hence of a unique $H_1$.

\bigskip

{\boldmath{$n\in (\frac32,3)$.}} The afore-mentioned non-existence of solutions to \eqref{problem_g} for mobility exponents $n \in [\frac 3 2,3)$ entails that the ansatz \eqref{ansatz_self_n1_strong} breaks down for this range of mobilities in the regime of strong contact-line friction $\alpha > \frac{4}{n+3}$. This necessitates a matched-asymptotics approach in which we distinguish between outer and inner region. Since the case $n = \frac 3 2$ contains a logarithmic resonance (see \eqref{asym_f}), we only concentrate on the range $n \in (\frac 3 2,3)$ in what follows.

\medskip

With $H_0$ and $s_0$ given by \eqref{def-BH0} and \eqref{qwe2_gen_n}, respectively, it follows from the previous analysis that $s = s_0 (1+o(1))$ and $H(t,x)=H_0(x)$ in the outer region (whose extent is to be determined). Near $x = 1$, from \eqref{h0_c1_gen_n_weak} and \eqref{asym_f}, we infer for $n \in (\frac 32,3)$ that
\begin{align}\label{outer_contact}
H_0 &= C_5 B_0^{\frac 2 n} (1-x)^{\frac 3 n} (1+o(1))
&& \mbox{for} \quad 0 < 1-x \ll 1,
\end{align}
where more precisely
\begin{equation}\label{def-c5}
C_5 = \big(\tfrac 3 n (\tfrac 3 n - 1) (2 - \tfrac 3 n)\big)^{-\frac 1 n}.
\end{equation}
In the inner region $x \nearrow 1$, we use the traveling-wave ansatz
\begin{equation}\label{inner_tw}
H(t,x)= s F_\mathrm{in}(\xi) \quad \xi = s(1-x).
\end{equation}
Inserted into \eqref{tfe_classical_1_fixed} with $s = s_0 (1+o(1))$, this gives, after an integration using \eqref{tfe_classical_2_fixed} and \eqref{tfe_classical_4_fixed},
\begin{subequations}\label{inner_sys}
\begin{align}\label{eq_inner}
F_\mathrm{in}^{n-1} \tfrac{\d^3 F_\mathrm{in}}{\d \xi^3} &= - \dot s_0 && \text{for} \quad \xi > 0.
\end{align}
The boundary condition \eqref{tfe_classical_4_fixed} is then automatically satisfied; the conditions \eqref{tfe_classical_2_fixed} and \eqref{tfe_classical_3_fixed} translate into
\begin{align}
F_\mathrm{in} = 0 \qquad\mbox{and}\qquad  \big(\tfrac{\d F_\mathrm{in}}{\d\xi}\big)^2 = (\dot s_0)^\alpha && \text{at} \quad \xi = 0, \label{inner_h_contact}
\end{align}
and are complemented with the condition
\begin{align}\label{inner_d2h_bulk}
\tfrac{\d^2 F_\mathrm{in}}{\d\xi^2} &\to 0 && \text{as} \quad \xi \to \infty \quad \text{for} \quad \tfrac 3 2 < n < 3,
\end{align}
\end{subequations}
which in view of \eqref{outer_contact} is required for the matching to the outer solution $H_0$. Well-posedness of \eqref{inner_sys} may be obtained using the strategies in \cite{CG-simai} or \cite{ggo.2016}, where related problems were considered. Here we sketch the application of the argument in \cite{ggo.2016} to the present case. For $\xi\ll 1$, a  one-parametric solution family to \eqref{eq_inner} and \eqref{inner_h_contact} is given by
\begin{align}\label{inner_xi_0}
F_\mathrm{in}(\xi) = \begin{cases} \big(\dot s_0^{\frac \alpha 2} \xi - \frac{\dot s_0^{1+\frac \alpha 2(1-n)}}{(4-n) (3-n) (2-n)} \xi^{4-n} + a_\mathrm{in} \xi^2 + \mathrm{h.o.t.}\big) & \text{for } n \ne 2 \\[1ex]
\dot s_0^{\frac\alpha 2} \big(\xi + \frac{\dot s_0^{1-\alpha} }{2} \xi^2 \log \xi + a_\mathrm{in} \xi^2 + \mathrm{h.o.t.}\big) & \text{for } n = 2 \end{cases}
\quad \mbox{as $\xi\searrow 0$,} \quad a_\mathrm{in}\in \R
\end{align}
where $\mathrm{h.o.t.}$ denotes higher-order terms. For  $\xi\gg 1$, a one-parametric (plus translation) solution family to \eqref{eq_inner} and \eqref{inner_d2h_bulk} is given by
\begin{align}\label{inner_xi_inf}
F_\mathrm{in} = \dot s_0^{\frac 1 n}\big(C_5 \xi^{\frac 3 n} + b_\mathrm{in} \xi + \mathrm{h.o.t.}\big) \quad \text{for $\tfrac 3 2 < n < 3$} \quad \text{as} \ \xi \to \infty, \quad b_\mathrm{in}\in \R.
\end{align}
Therefore, for $\frac 3 2 < n < 3$, \eqref{inner_xi_0} and \eqref{inner_xi_inf} define two two-dimensional solution manifolds (with parameters $(\xi,a_\mathrm{in})$ and $(\xi,b_\mathrm{in})$, respectively) of the three-dimensional dynamical system $(F_\mathrm{in}, \frac{\d F_\mathrm{in}}{\d \xi}, \frac{\d^2F_\mathrm{in}}{\d \xi^2})$: their intersection is the one-dimensional solution curve determining $a_\mathrm{in}$ and $b_\mathrm{in}$ and defining the solution to \eqref{inner_sys}.

In terms of $x$, expansion \eqref{inner_xi_inf} translates at leading order into
\begin{align}\label{in-out}
H(t,x) &\stackrel{\eqref{inner_tw}}= s_0 F_\mathrm{in} = \dot s_0^{\frac 1 n} s_0^{\frac{n+3}{n}} C_5 (1-x)^{\frac 3 n} (1+o(1)) && \mbox{for} \quad s^{-1} \ll 1-x\ll 1.
\end{align}
For $s\gg 1$, expansions \eqref{outer_contact} and \eqref{in-out} thus have an overlapping region,  $s^{-1}\ll 1-x \ll 1$: matching them yields
\[
\dot s_0^{\frac 1 n} s_0^{\frac{n+3}{n}} C_5 = B_0^{\frac 2 n} C_5,
\]
which is fulfilled because of \eqref{s_ode_gen_n_weak}. Since
\begin{align*}
s_0 F_\mathrm{in} &\stackrel{\eqref{inner_xi_0}}{=} s_0^2\dot s_0^{\frac \alpha 2} (1-x) (1+o(1)) \stackrel{(\ref{s_ode_gen_n_weak})}= B_0^\alpha s_0^\frac{4-\alpha(n+3)}{2}(1-x) (1+o(1)) \\
&\stackrel{(\ref{qwe2_gen_n})}= B_0^\alpha \big((n+4) B_0^2 t\big)^{\frac{4-\alpha(n+3)}{2(n+4)}}(1-x) (1+o(1)) && \mbox{for} \quad 0 \le 1-x \ll s^{-1}, 
\end{align*}
we conclude that $H(t,x)= \tilde H_0(t,x)(1+o(1))$, where
\begin{equation}\label{match}
\tilde H_0(t,x)\sim \left\{\begin{array}{ll}
H_0(x) & \mbox{for $1-x \gg s^{-1}$}
\\ B_0^\alpha \big((n+4) B_0^2 t\big)^{-\beta} (1-x)& \mbox{for $1-x \ll s^{-1}$},
\end{array}\right. \quad s\gg 1, \quad \beta= \frac{\alpha(n+3)-4}{2(n+4)}>0.
\end{equation}

\section{Conclusions \ch{and outlook}}\label{s:concl}

We have obtained an asymptotic description of the long-time dynamics of \ch{solutions $(H,s)$ to} \eqref{tfe_classical_fixed}. \ch{This translates back to the original height via $h(t,y)=s^{-1}H(t,s^{-1}y)$ with support $(-s,s)$.}


In the balanced case, $\alpha=\tfrac{4}{n+3}$, we have shown that \eqref{tfe_classical_fixed} admits for any $D\ge 0$ a unique self-similar profile, determined by \eqref{ss-new}, and $s$ scales like $t^\frac{1}{n+4}$. Both profile and speed depend on $D$, the profile ranging from the zero contact-angle one for $D=0$ to a parabolic shape as $D\to +\infty$. Numerical solutions to \eqref{ss-new} have been provided in Fig. \ref{fig:exact_n1_n2}.}

A very interesting question concerns stability, i.e., the convergence of solutions $H$ of \eqref{tfe_classical_fixed} to the self-similar profile \eqref{self_exact}. This issue has already been raised in \cite[\S6]{CarrilloToscani2002} and \cite[\S8]{Gnann2015} in a similar context and could be faced, starting from $n=1$, either by energy-entropy methods \cite{CarrilloToscani2002,carlen-ulusoy,MMCS,CarlenUlusoy2014}, by studying global-in-time classical solutions for perturbations of special solutions like a self-similar profile \cite{ggo.2013,Gnann2015,seis}, a traveling wave \cite{GnannIbrahimMasmoudi2019,ggko}, or an equilibrium-stationary solution \cite{GKO,BGKO2016,Knuepfer2011,Knuepfer2015,Knuepfer2022,Esselborn2016,MajdoubMasmoudiTayachi2021}. In the three latter cases, the difficulty lies in finding suitable estimates for the linearized evolution of perturbations. Unlike in the case of the Smyth-Hill profile \cite{SmythHill1988}, this linearization does not carry an apparent symmetric structure, which is why the linear analysis is presumably more involved. Fig.~\ref{fig:generic_solution_rescaled} provides numerical simulations supporting convergence \ch{in the balanced case for $n=1$ and $n=2$.}

\begin{figure}[t]
\centering
    \includegraphics[width=0.4\textwidth]{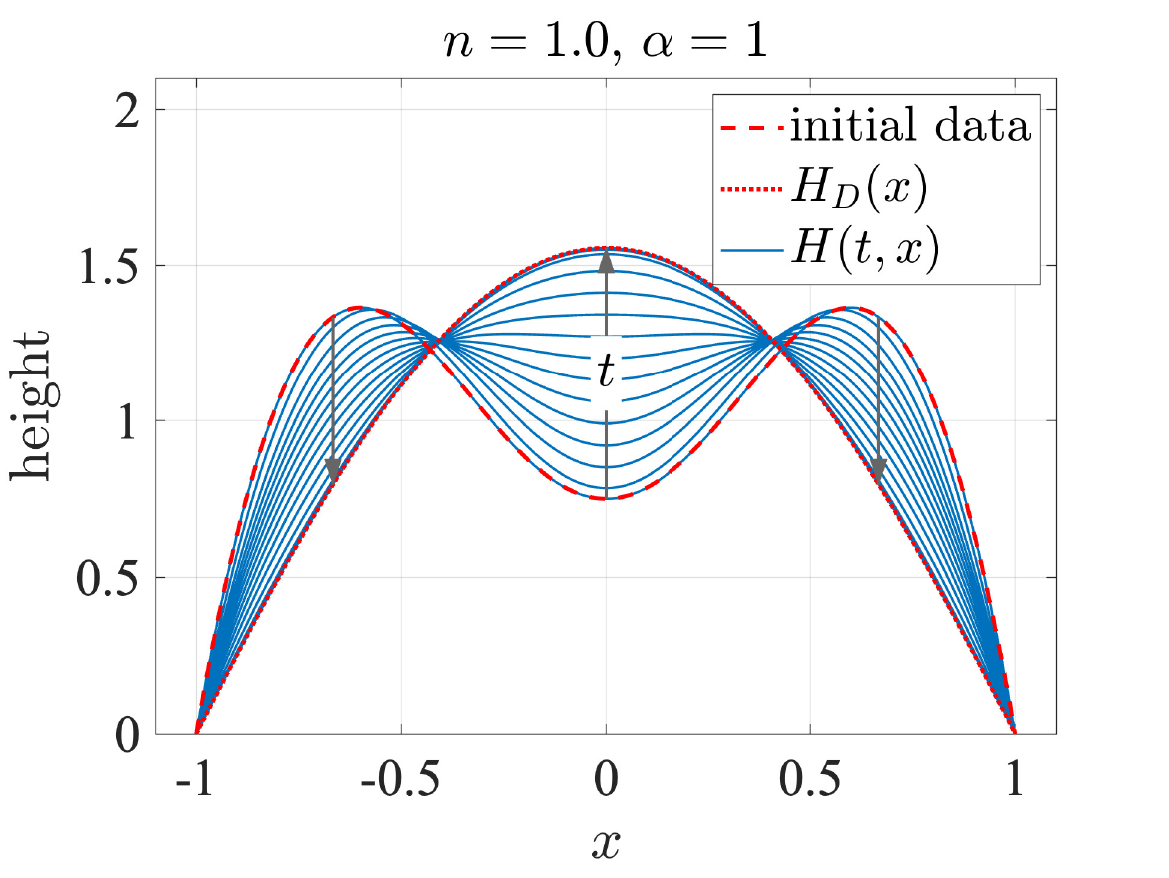}\qquad
    \includegraphics[width=0.4\textwidth]{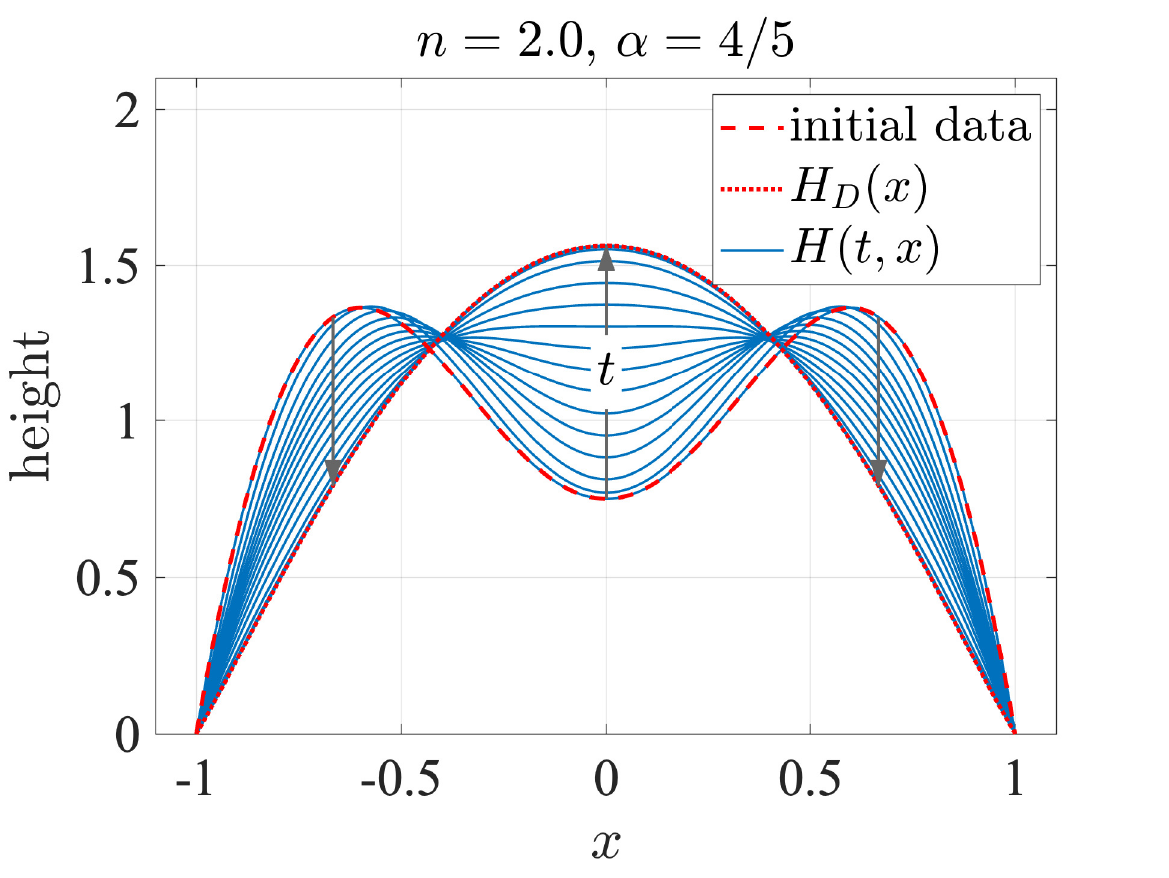}

    \caption{For $D=1$ and $n=1$ (left) or $n=2$ (right), the solution $H(t,x)$ to \eqref{tfe_classical_fixed} for increasing times \ch{$0\le t<\infty$} (blue lines, \ch{time increases along arrows}) with initial datum $H(t=0,x)=\tfrac{15}{4}((1+x)(1-x)-\tfrac25(1+\cos(\pi x))$ (red dashed line), compared with the exact self-similar solutions $H_D(x)$ to \eqref{ss-new} approached as $t\to\infty$ (red \ch{dotted} line).}
 \label{fig:generic_solution_rescaled}
\end{figure}


\medskip

For strong contact-line friction, $\alpha<\frac{4}{n+3}$, $H$ and $s$ obey the asymptotic
%
%
%
\begin{subequations}
\begin{align} \label{strong-alt-gen-n-s-bis}
H(t,x) &= \tfrac32 (1-x^2) + C_2  \, t^{-(1-\gamma(n+4))} H_1(x)  + O\big(t^{-2(1-\gamma(n+4))}\big) && \mbox{for} \quad x \in [-1,1], \\
s(t) &= 3^{\frac{1-\gamma}{2}} \gamma^{-\gamma} t^\gamma \begin{cases} \big(1+O(t^{-(1-\gamma(n+4))})\big) & \text{ if } \gamma \ne \frac{1}{2(n+4)}, \\ \big(1+O(t^{- \frac 1 2} \log t)\big) & \text{ if } \gamma = \frac{1}{2(n+4)},\end{cases} \label{s1.gen_n.s.f.bis}
\end{align}
\end{subequations}
where $H_1$ and $C_2$ are defined in \S \ref{qss:strong} (see \eqref{cond_d2h1_gen_n}-\eqref{h1_gen_n_strong_sol} and \eqref{def-C2}), and
\begin{align*}
H_1(x) &= \tfrac{1}{120}(1-x^2)(1-5x^2) && \text{if} \quad n=1.
\end{align*}
The leading-order profile is a parabolic one, with finite non-zero contact angle, and coincides with the exact self-similar profile in the limiting case $D=\infty$ (\S \ref{s:ss}). \ch{In other words, $H_D\to H_0$ as $D\to\infty$.} In addition, the evolution of the contact line, as given by \eqref{s1.gen_n.s.f.bis}, is slower than the standard one and is dominated by the contact-line frictional exponent $\alpha$. Therefore, for $\alpha<\frac{\ch{4}}{n+\ch{3}}$ contact-line friction dominates the long-time dynamics uniformly in space and time (Fig.~\ref{7Sb}, first column).
\begin{figure}[h]
    \centering
    \includegraphics[width=0.4\textwidth]{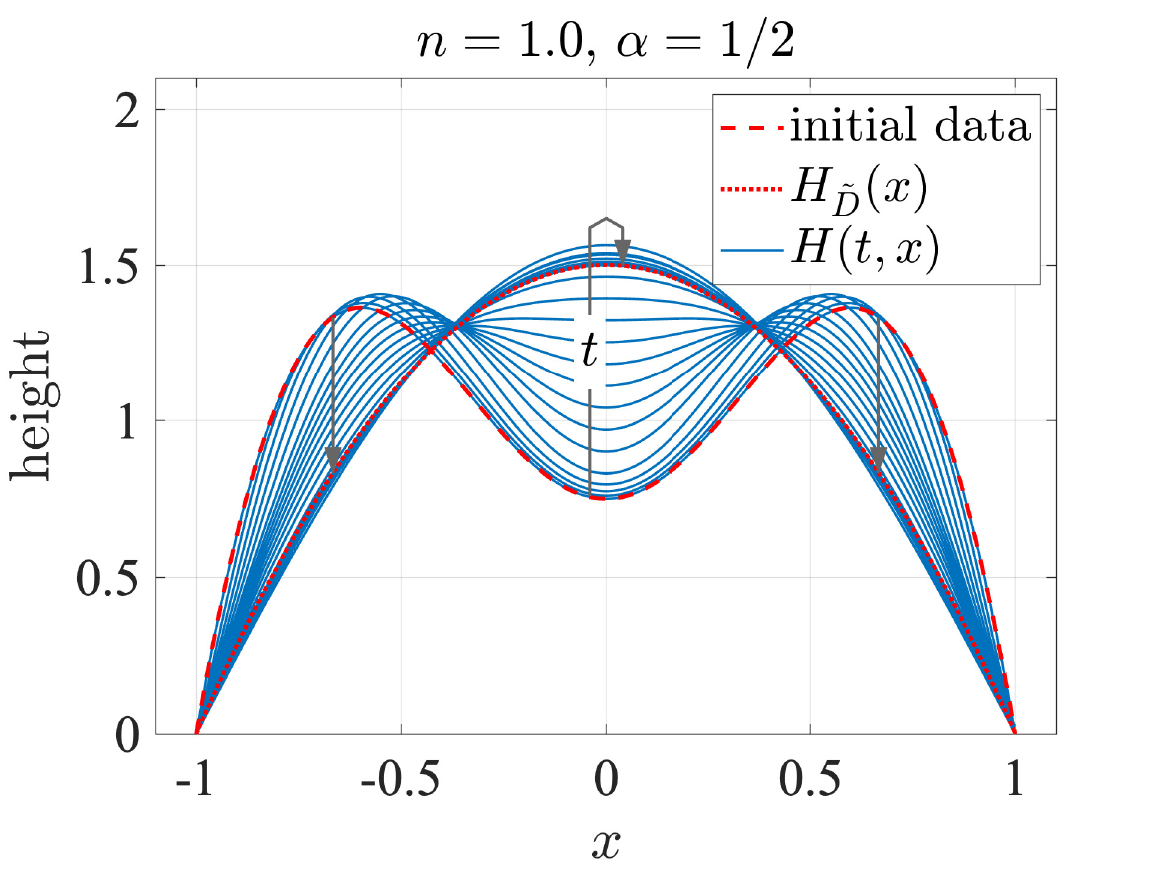}
    \includegraphics[width=0.4\textwidth]{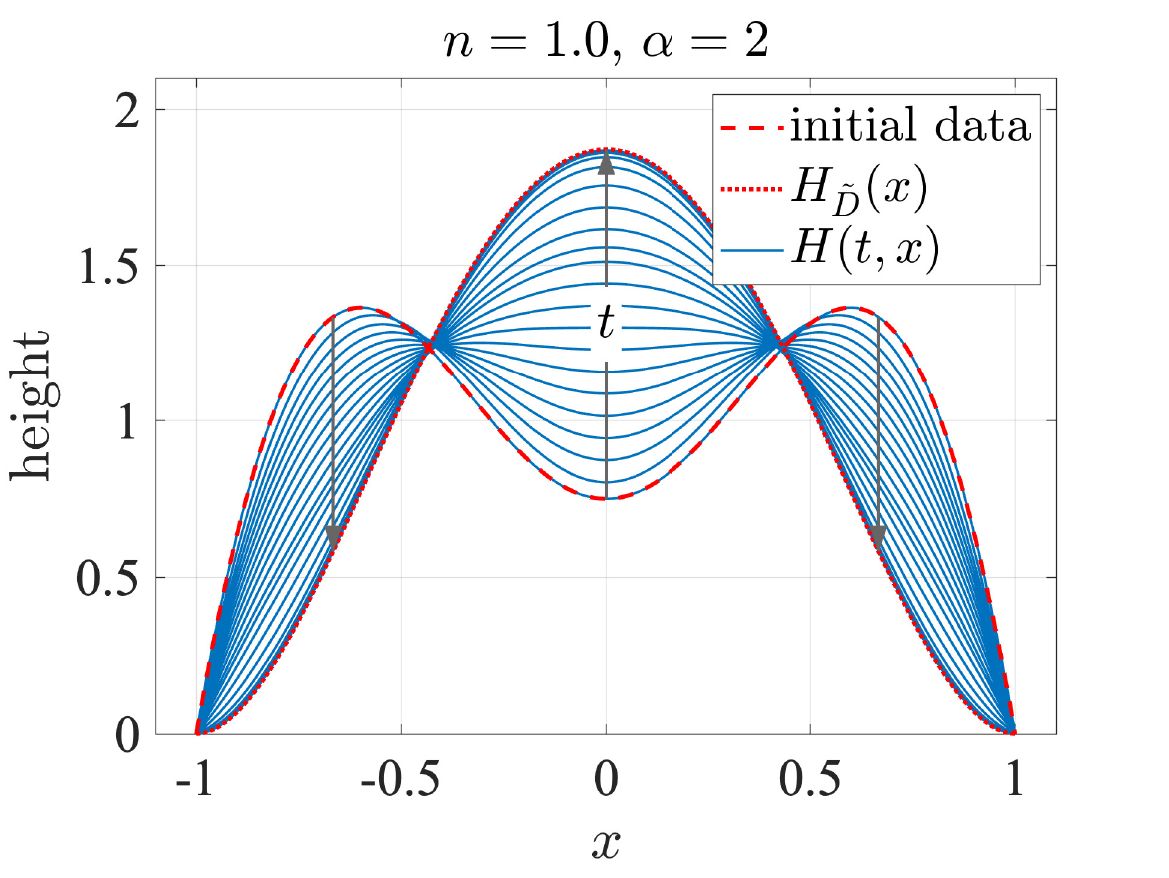}

    \includegraphics[width=0.4\textwidth]{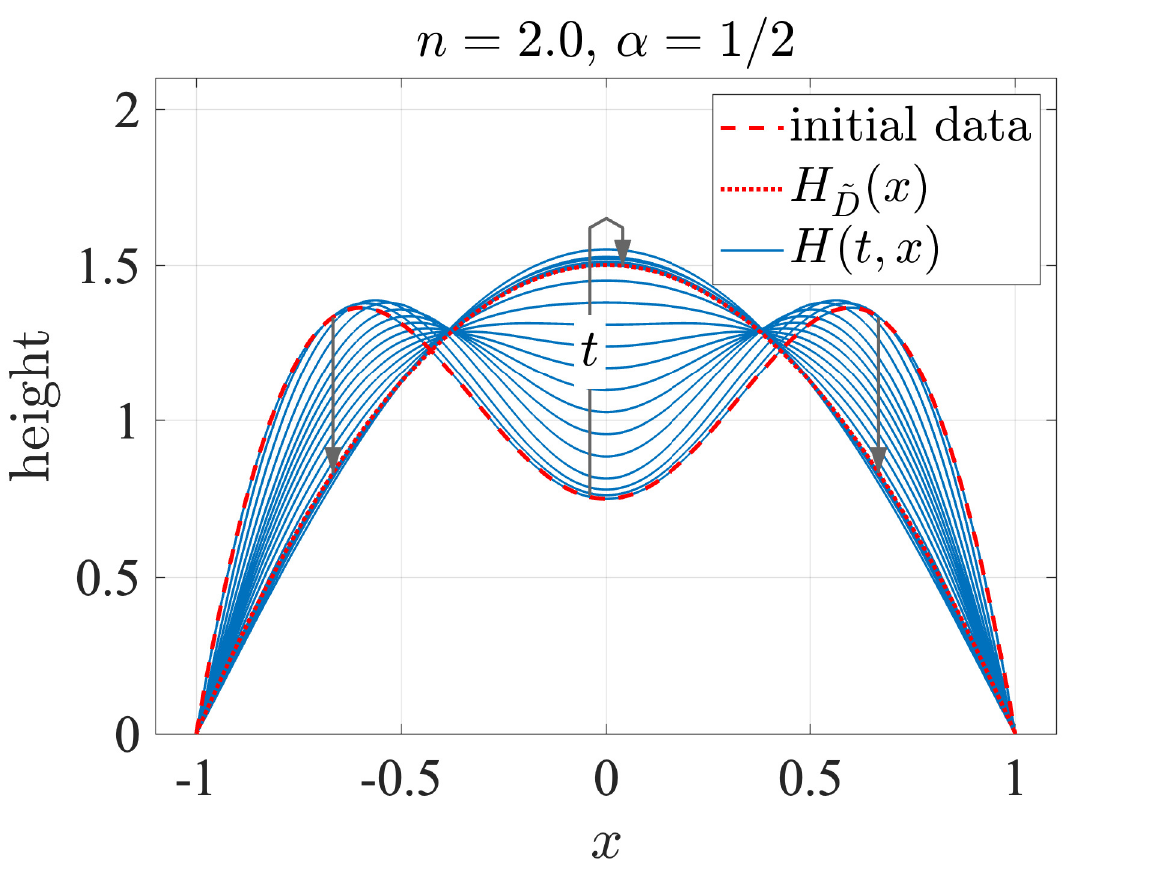}
    \includegraphics[width=0.4\textwidth]{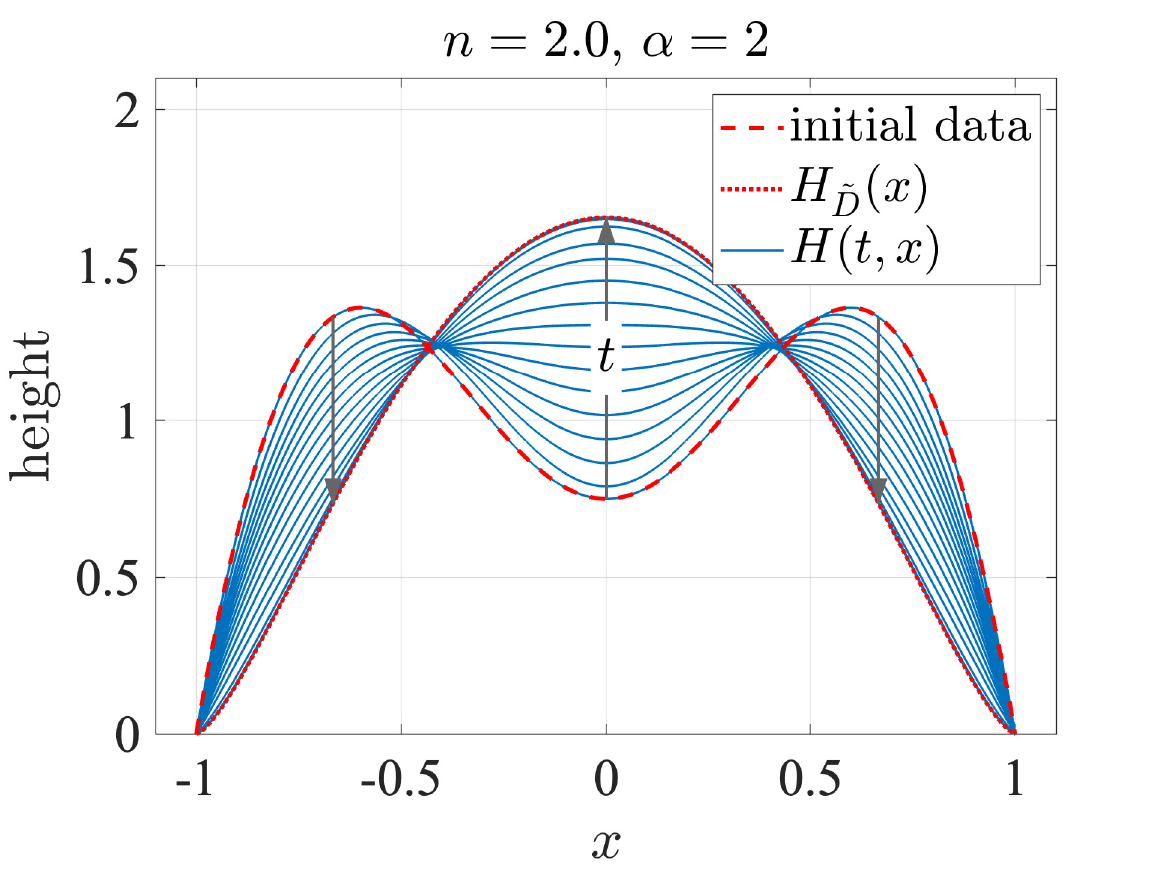}

    \caption{For $D=1$ and $n=1$ (top) or $n=2$ (bottom), the solution $H(t,x)$ to \eqref{tfe_classical_fixed} for different $t$ (blue lines) with initial datum $H(t=0,x)=\tfrac{15}{4}\big((1+x)(1-x)-\tfrac25(1+\cos(\pi x))\big)$ (red dashed line) for strong contact line friction $\alpha=1/2$ (left) and weak contact line friction $\alpha=2$ (right) compared with the exact self-similar solutions $\ch{H_{\tilde{D}}}(x)$ to \eqref{ss-new} approached as $t\to\infty$ (red \ch{dotted} line\ch{s, with $\tilde{D}\to\infty$ for $\alpha=1/2$, cf. Theorem \ref{thm}, and $\tilde{D}=0$ for $\alpha=2$}). Note that for $\alpha = \frac 1 2$ solutions go slightly above the graph of $\ch{H_{\tilde{D}}}$ before relaxing (a manifestation of the lack of \ch{a} comparison principle). \ch{The time evolution is indicated by dark gray arrows.}}\label{7Sb}
\end{figure}
Notably, the correction $H_1$, which is dictated by attaining the dynamical contact-line condition, is not localized near the contact line, but rather propagates throughout the solution's support (\ch{
Fig.~\ref{fig:correction-h1}, middle and right}).
\begin{figure}[h]
    \centering
    \includegraphics[width=0.315\textwidth]{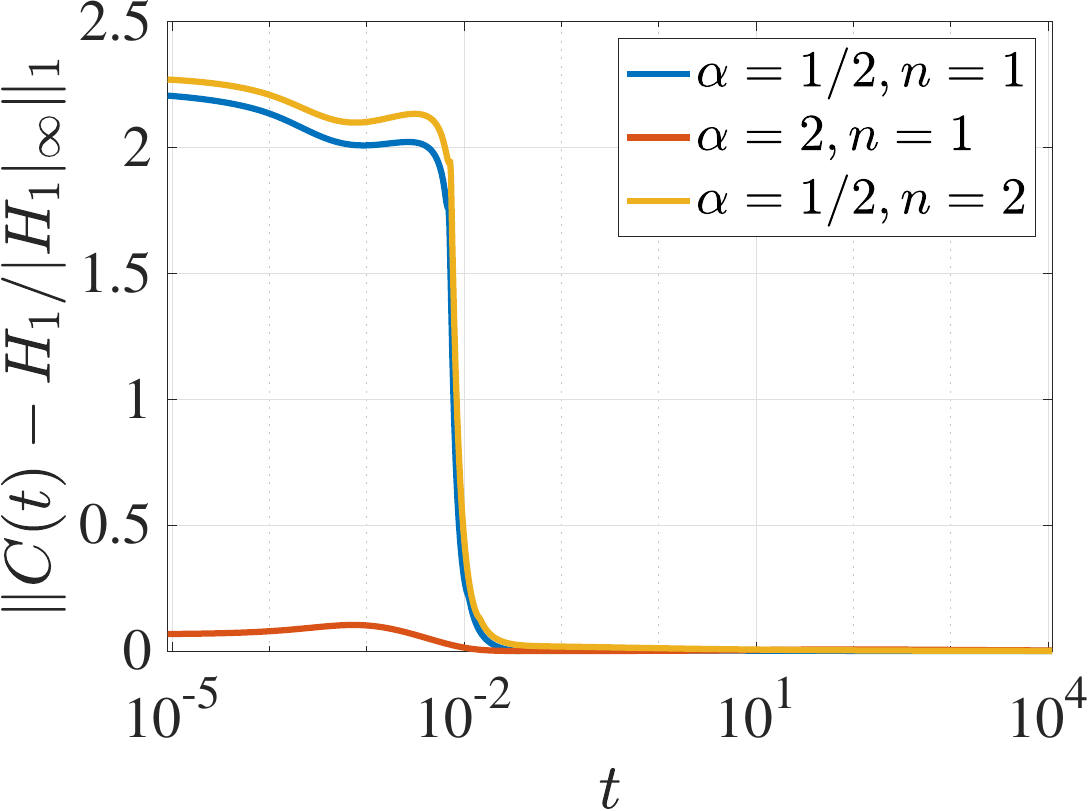}\hfill
    \includegraphics[width=0.335\textwidth]{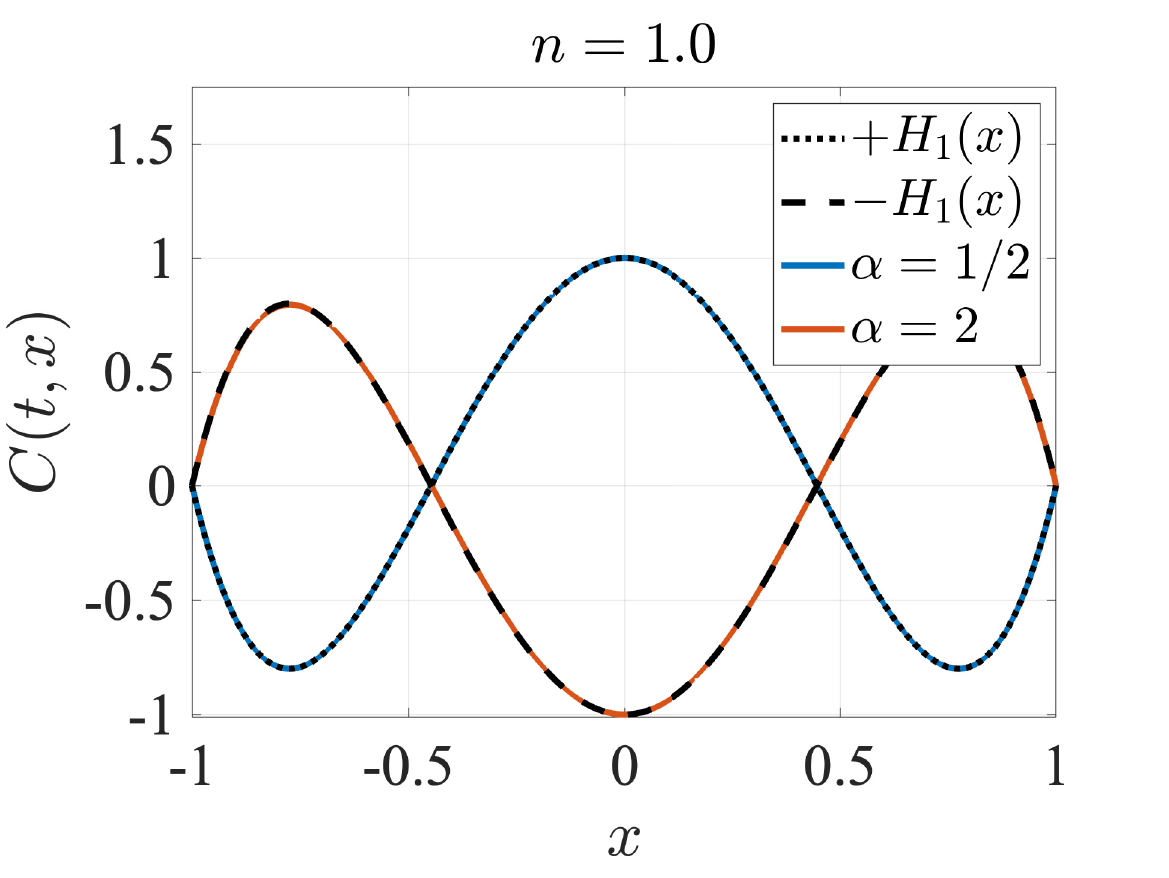}\hfill
    \includegraphics[width=0.335\textwidth]{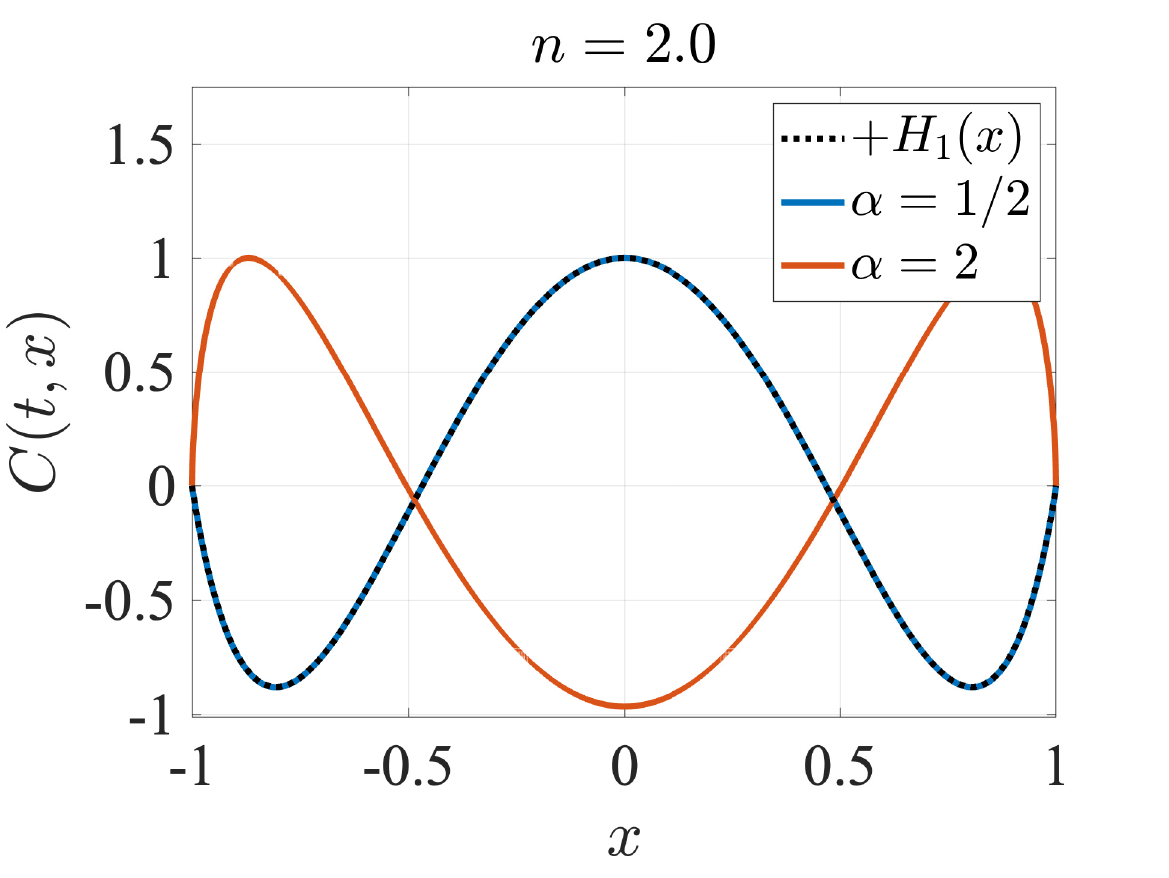}
    \caption{\ch{Convergence of the correction to the self-similar solution, where (left) we show the $L^1$ norm of the difference between $C(t,x)=\bigl(H(t,x)-H_0(x)\bigr)/|H(t,\cdot)-H_0(\cdot)|_\infty$ and the theoretically predicted, normalized correction $\pm H_1(x)/|H_1(\cdot)|_\infty$.
    For comparison, in (middle) and (right) we show $C(t,x)$ at $t=200$ for $n=1$, respectively $n=2$. Corrections $C$ (colored) are compared with theoretically predicted ones (dashed/dotted), when the latter are available.}}
    \label{fig:correction-h1}
\end{figure}

For weak contact-line friction, $\alpha>\frac{4}{n+3}$, the leading\ch{-}order dynamics instead coincide with the zero-contact-angle ones in terms of both profile and speed, in the sense that
\begin{equation}\label{concl-lead-weak}
H(t,x)=H_{0}(x) \qquad\mbox{and}\qquad s(t)= \big((n+4) B_0^2 t\big)^{\frac{1}{n+4}},
\end{equation}
where $(B_0,H_0)$ is the unique solution to \eqref{ss-new} with $D=0$. In other words, $H_0\ch{=H_{D=0}}$ is the unique self-similar profile of \eqref{TFE-n} with zero contact angle and \ch{normalized} mass (Fig. \ref{7Sb}, second column). \ch{O}ur results on the corrections turn out to depend on the mobility exponent $n$: if $n\in [1,3/2)$ we find a global estimate as above, in the sense that
\begin{subequations}\label{exp-weak-1_bulk1.1_gen_n_bis}
\begin{align}
H(t,x) &= H_0(x)+ \big((n+4) B_0^2 t\big)^{- \beta }H_1(x)+ O(t^{- 2\beta}), \qquad \beta= \frac{\alpha (n+3)-4}{2(n+4)}, \label{exp-weak-1} \\
s(t) &= \big((n+4) B_0^2 t\big)^{\frac{1}{n+4}} \begin{cases} \big(1+O(t^{-\beta})\big) & \text{if } \alpha \ne 2 \frac{n+6}{n+3}, \\ \big(1+O(t^{-1} \log t)\big) & \text{if } \alpha = 2 \frac{n+6}{n+3}\end{cases} \label{bulk1.1_gen_n_bis}
\end{align}
\end{subequations}
where $H_1(x)$ is uniquely determined in \S\ref{qss:weak}. On the other hand, if $n\in (3/2,3)$ we are only able to qualify a {\em local} correction:
$$
H(t,x) \sim \left\{\begin{array}{ll}
H_0(x) & \mbox{for $1-x \gg t^{-\frac{1}{n+4}}$}
\\ B_0^\alpha \big((n+4) B_0^2 t\big)^{-\beta} (1-x)& \mbox{for $1-x \ll  t^{-\frac{1}{n+4}}$}.
\end{array}\right.
$$
It is apparent from our numerical simulation (Fig.~\ch{
\ref{fig:correction-h1}, right}) that the correction should consist, also in this case, of a globally defined function $H_1$ and $s_1$ as in \ch{\eqref{exp-weak-1_bulk1.1_gen_n_bis}}, with the same time exponent $\beta$; however, at the moment this is left as an open question.

\medskip

As in the \ch{\st{critical} balanced} case \ch{\st{(cf. Remark)}}, a rigorous stability result is expected to hold \ch{for $\alpha\ne \frac{4}{n+3}$} and would be interesting to be pursued, showing convergence of solutions $H$ of \eqref{tfe_classical_fixed} to the corresponding $H_0$. In this respect, Fig. \ref{7Sb} \ch{and \ref{fig:dynamic_s}}
\begin{figure}[t]
    \centering

    \includegraphics[width=0.45\textwidth]{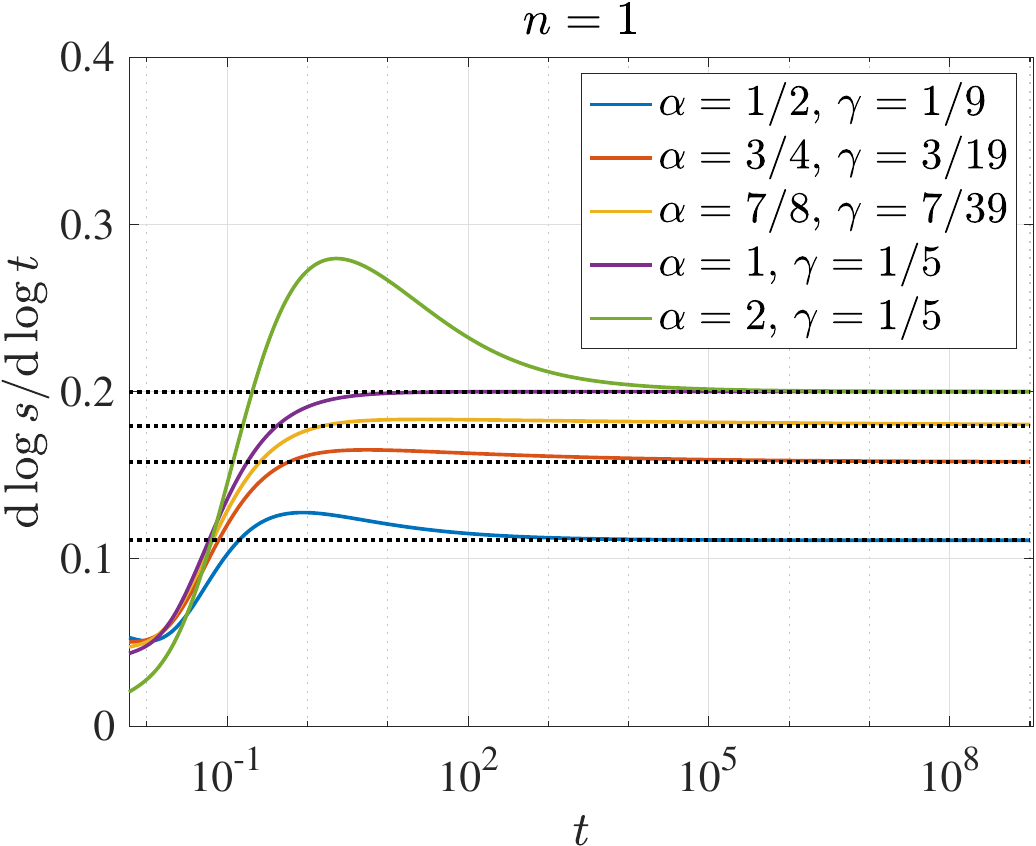}\hfill
    \includegraphics[width=0.45\textwidth]{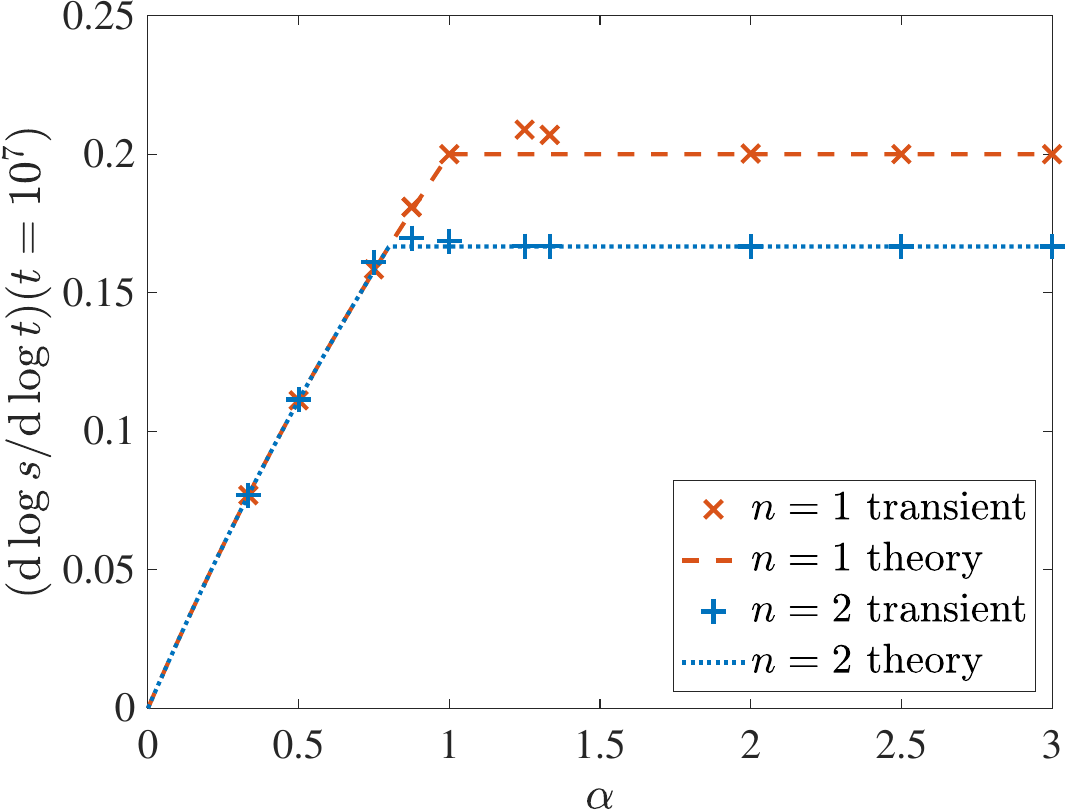}

    \caption{\ch{Convergence to self-similar solutions observed in the long-time behavior $s(t)\sim t^\gamma$ of numerical solutions to \eqref{tfe_classical_fixed} for mobility exponents $n=1,2$, where (left) we show the logarithmic time-derivative $\tfrac{\mathrm{d}\log s}{\mathrm{d}\log t}$ approaching the predicted $\gamma$ (black dotted line) for $n=1$ and (right) we show the evaluation of the logarithmic time-derivative at $t=10^7$ for various $\alpha$ and for $n=1,2$ compared to the theoretical prediction.}}
    \label{fig:dynamic_s}
\end{figure}
\ch{provide} numerical simulations supporting convergence, and \ch{Fig.~\ref{fig:correction-h1} and \ref{8-11S}}
\begin{figure}[t]
    \centering
    \includegraphics[width=0.315\textwidth]{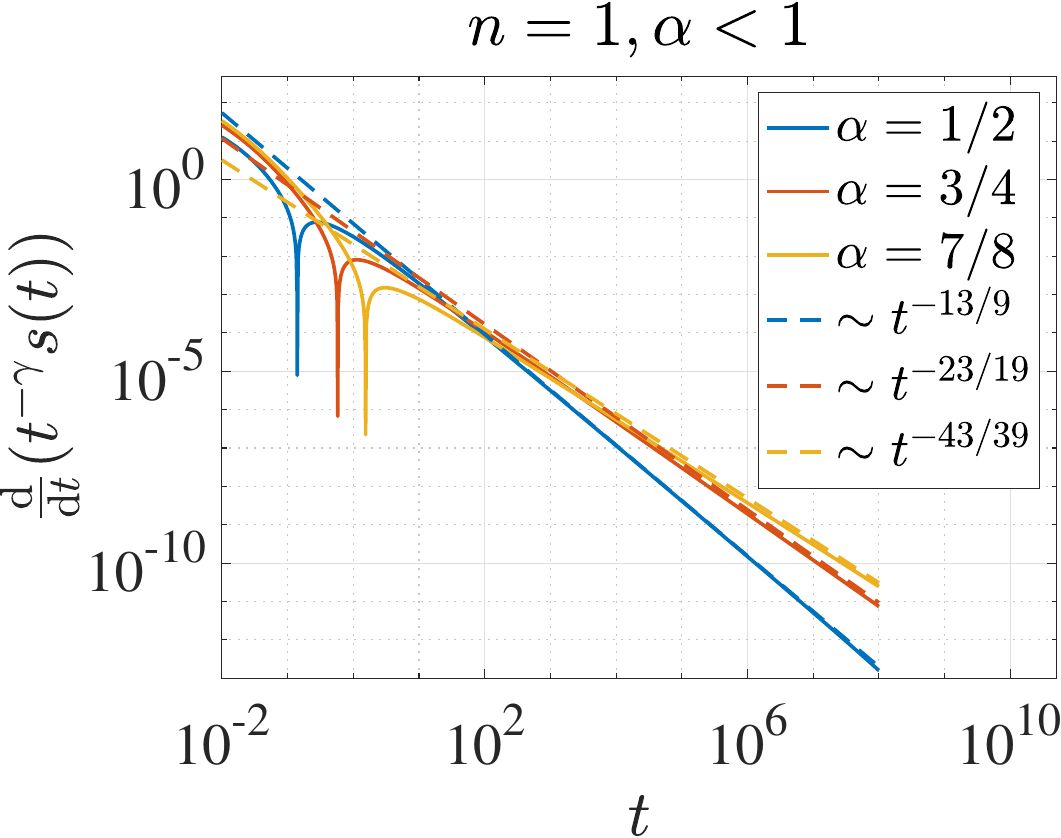}\hfill
    \includegraphics[width=0.315\textwidth]{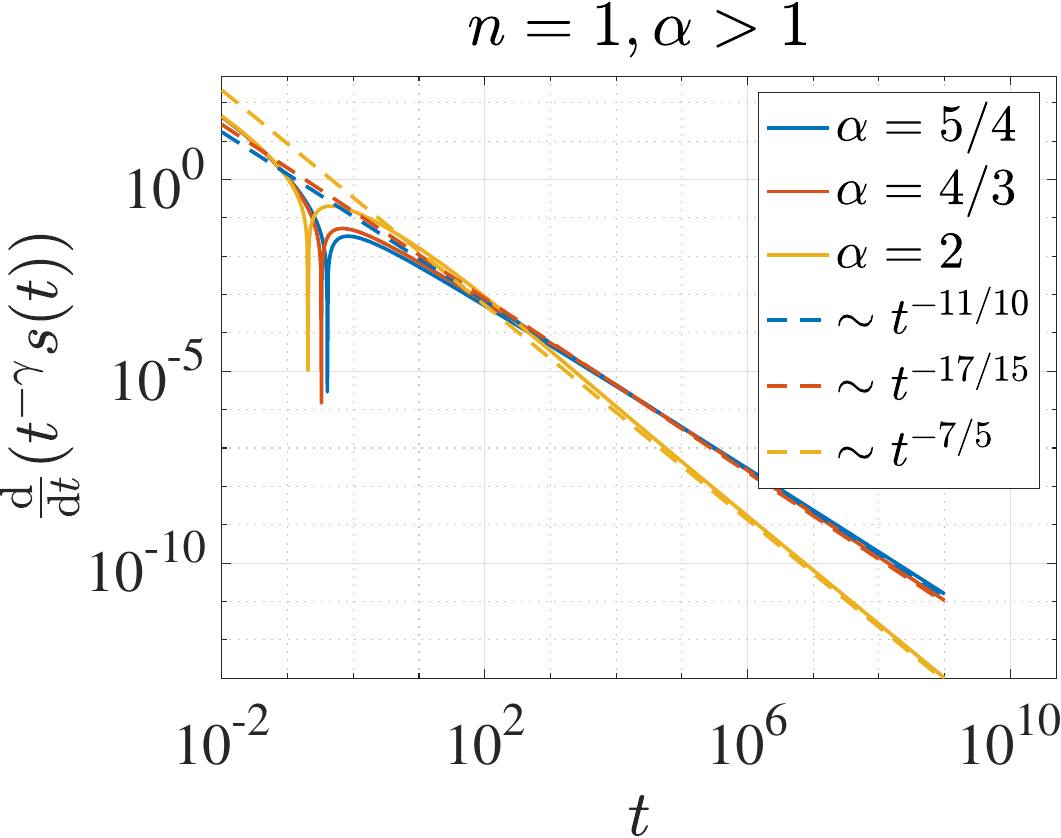}\hfill
    \includegraphics[width=0.315\textwidth]{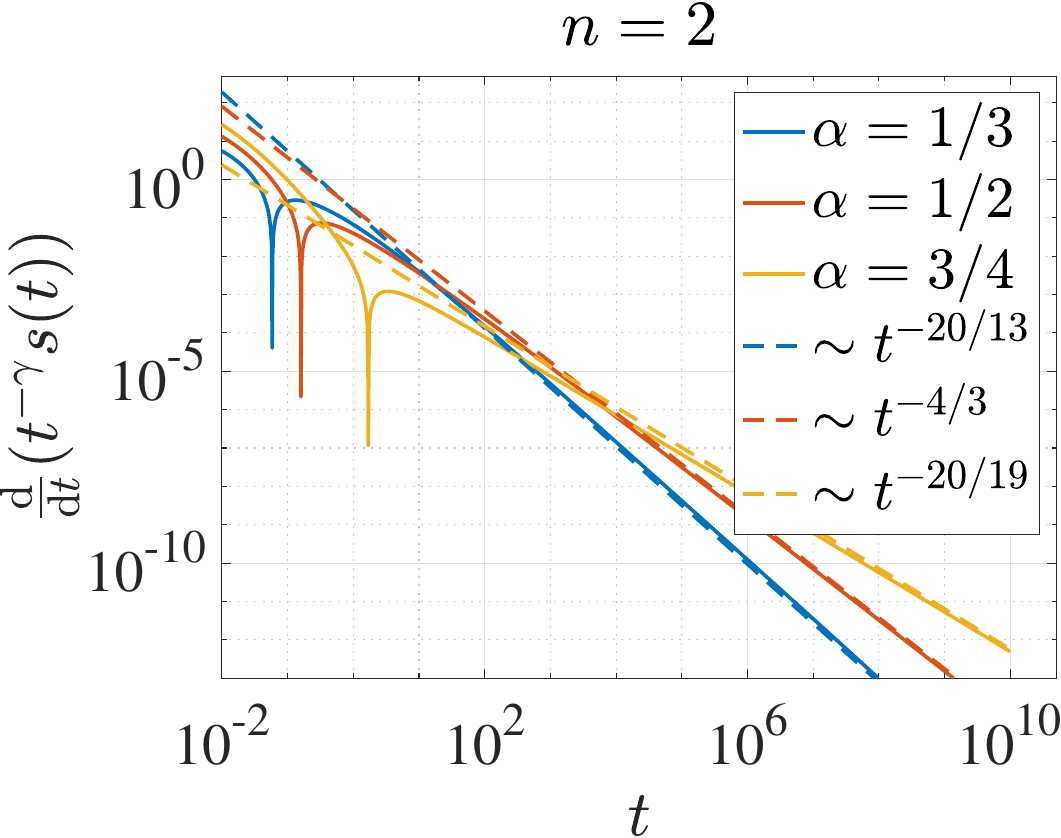}
    \caption{Time correction, in the form $\tfrac{\d}{\d t}(t^{-\gamma}s(t))$ versus $t$, \ch{for numerical solutions to \eqref{tfe_classical_fixed}}, shown using full lines for strong contact line friction $n=1$, $\alpha<1$ (left), weak contact line friction $n=1$, $\alpha>1$ \ch{(center),} and strong contact line friction $n=2$, $\alpha<4/5$ (right)\ch{. Dashed lines are the corresponding theoretical predictions.}
    }\label{8-11S}
\end{figure}
numerically validate the next-to-leading-order corrections to the macroscopic profile $H_0$.

\medskip

A final remark concerns dependence on the contact-line frictional coefficient $d$ (cf. \eqref{BC2-bis}), which we could scale out. Let $s_*$ be the asymptotic position of the free boundary of the solution to  \eqref{tfe_classical} with $D=1$ and $M=2$, as given by \ch{\eqref{s1.gen_n.s.f.bis}}
%
%
and \eqref{concl-lead-weak}. Keeping mass equal to two, but for a generic $d>0$, the asymptotic position of the contact line in the original equation \eqref{tfe_classical_unscaled} is given by
    $$
 s(t)= Y_* s_*(T_*^{-1} t), \qquad\mbox{where}\quad H_* \stackrel{(\ref{scaling2})}=Y_*^{-1}\stackrel{(\ref{scaling1})}= T_*^{-\frac{1}{n+4}}\stackrel{(\ref{D=1})}= d^{\frac{1}{4-\alpha(n+3)}}, \quad\alpha\ne \tfrac{4}{n+3},
    $$
    yielding
    $$
     s(t) \sim \left\{\begin{array}{rl}
    3^{\frac{1-\gamma}2}\gamma^{-\gamma} d^{-\frac{1}{\alpha+4}} t^\gamma & \mbox{if $\alpha<\frac{4}{n+3}$}
    \\
     \big((n+4) B_0^2 t\big)^{\frac{1}{n+4}} & \mbox{if $\alpha>\frac{4}{n+3}$}
\end{array}\right.        \qquad \mbox{as} \ t\to +\infty.
    $$
    This shows that in the strong case, as expected, larger frictional coefficients $d$ yield slower speeds; notably, however, in the weak case the leading-order asymptotic speed in \eqref{tfe_classical_unscaled} is instead oblivious of both $\alpha$ and $d$, hence universal.

\appendix

%
\section{The gradient-flow formulation and its discretization}\label{sec:numerics}
\renewcommand{\thesection}{\Alph{section}}\setcounter{section}{1}\setcounter{equation}{0}
%
\newcommand{\oomega}{{\{h>0\}}}

\subsection{Gradient-flow formulation}

Problem \eqref{tfe_classical_unscaled} and its discretization are based on a gradient flow formulation for the height $h$ and the wetted area $\oomega=(s_-,s_+)$ as in \eqref{support}. The gradient flow
\begin{align}
    \label{eqn:evolution}
    \dot{h}=-\partial_\eta \Psi^*(h,{\rm D}\mathcal{E}[h]).
\end{align}
is {formally} defined in terms of the energy  $\mathcal{E}$ in \eqref{def-E} and {the} dual dissipation potential
\begin{align}
    \label{eqn:dissipation}
    \Psi^*(h,\eta)  =\frac12\int_\oomega  m(h)\, (\partial_y\pi)^2 {\rm d}y + \frac12\int_{\partial\oomega} \mcl(\partial_y h)\,\zeta^2 {\rm d}s,\qquad \eta=(\pi,\zeta),
\end{align}
where $\Psi^*(h,0)\equiv 0$ and $\Psi^*$ is convex in the second argument.
{The bulk mobility $m$ and the contact-line mobility $\mcl$ are non-negative functions, which in the case of \eqref{tfe_classical_unscaled} are given by}
\begin{align}\label{spec-m}
    m(h)=h^n \qquad\mbox{and}\qquad \mcl(z)=\tfrac{2}{d^{\frac 1 \alpha}}\ch{|z|}^{\frac 2 \alpha}.
\end{align}
The first term in $\Psi^*$ encodes the standard dissipation of the viscous fluid with nontrivial slip boundary conditions, whereas the second term encodes the extra dissipation at the contact line $y=s_\pm$. This formulation relies on the formal {assumption}
that for fixed time there exists
a representation of the dual force $\eta=(\pi:\oomega\to\mathbb{R},\zeta:\partial\oomega\to\mathbb{R})$ such that
\begin{align}
    \label{eqn:representation}
    \langle \eta ,\dot{h} \rangle = \int_\oomega \pi\dot{h}{\,{\rm d}y} + \int_{\partial\oomega}\zeta\dot{h}{\,{\rm d}s} \quad \mbox{for any rate $\dot{h}(t):\oomega\to\mathbb{R}$.}
\end{align}
Using this representation\footnote{For dual forces $\eta$ that do not admit such a representation, we formally set $\Psi^*(h,\eta)=\infty$.}, one identifies $\eta$ with $\langle \eta,\bar{v}\rangle=\langle{\rm D}\mathcal{E}[h],\bar{v}\rangle$ in a weak formulation, i.e.\ch{,}
\begin{subequations}
    \label{eqn:weakformulation}
    \begin{align}
        \label{eqn:weak1}
        \int_\oomega \pi \bar{v} {\,\rm d}y+ \int_{\partial\oomega} \zeta \bar{v} {\,\rm d}s= \int_\oomega \partial_y h \  \partial_y \bar{v} {\ \rm d}y+ \int_{\partial\oomega}\frac{1}{2|\partial_y h|}\big((\partial_yh)^2+(-2S)\big) \bar{v}{\,\rm d}s.
    \end{align}
  Testing the gradient flow \eqref{eqn:evolution} with $\bar{\eta}=(\bar{\pi},\bar{\zeta})$  and using \eqref{eqn:dissipation} gives $\langle\bar{\eta},\dot{h}\rangle=\langle\bar{\eta},-\partial_\eta\Psi^*(h,{\rm D}\mathcal{E}[h])\rangle$, or in full detail
    \begin{align}
        \label{eqn:weak2}
        \int_\oomega \dot{h}{\bar{\pi}} {\,\rm d}y+ \int_{\partial\oomega}\dot{h}{\bar{\zeta}} {\,\rm d}s= -\int_\oomega m(h) \ \partial_y\pi \ \partial_y\bar{\pi}\ {\rm d}y - \int_{\partial\oomega}\mcl (\partial_y h) \zeta \bar{\zeta} {\,\rm d}s.
    \end{align}
\end{subequations}
We seek $(\dot{h},\pi,\zeta)$ that satisfy \eqref{eqn:weakformulation} for all test functions $(\bar{v},\bar{\pi},\bar{\zeta})$. By testing this weak formulation with
$(\bar{v},\bar{\pi},\bar{\zeta})=(\dot{h},\pi,\zeta)$, we can deduce the energy descent
\begin{align}
    \tfrac{{\rm d}}{{\rm d}t}\mathcal{E}[h(t)]=\langle \eta,\dot{h}\rangle=-\Big(\int_\oomega m(h)\, (\partial_y\pi)^2{\,\rm d}y + \int_{\partial\oomega}\mcl (\partial_y h) \, \zeta^2{\,\rm d}s\Big)\le 0.
\end{align}
Additionally, the solution $h(t,y)$ and the contact line $s_\pm(t)$ satisfy the kinematic condition
\begin{align}
   \tfrac{\d}{\d t} h(t,s_\pm (t))=\dot{h}\big(t,s_\pm(t)\big)+\dot{s}_\pm(t) \partial_yh\big(t,s_\pm(t)\big)=0,
\end{align}
which we can use to reconstruct the boundary velocity and evolve the domain {$\oomega$} and the solution $h$.
Using integration by parts and assuming the solution is smooth enough, from \eqref{eqn:weak1} we identify  $\pi=-\partial_y^2 h$ and $\zeta=\tfrac{1}{2|\partial_y h|}((-2S)-(\partial_y h)^2)$. Using \eqref{eqn:weak2} we can recover the general evolution of $h$ and $\oomega$. We will now focus on the complete wetting case, $S=0$. Then the thin-film dynamics are governed by
\begin{subequations}
    \begin{align}
        \label{eqn:strong1}
         & \dot{h}=\partial_y(m(h)\,\partial_y\pi), \qquad                         &  & \pi=-\partial^2_{y}h           &  & \text{in }\oomega,         \\
         \label{eqn:strong2}
         & \dot{h}=-\dot{s}_\pm \partial_yh = -\mcl(\partial_y h)\,\zeta, \qquad &  & \zeta=-\tfrac12 |\partial_y h| &  & \text{on }\partial\oomega.
    \end{align}
\end{subequations}
with natural boundary conditions for $\pi$ in \eqref{eqn:strong1}.
Assuming \eqref{spec-m}, at the contact line $\partial\{h>0\}$ we get \ch{for positive speeds (which are of interest to us)}
\begin{equation}\label{clnew}
\dot{h}= 2 d^{-\frac 1 \alpha} |\partial_y h|^{\frac 2 \alpha}\big(\tfrac12|h_y|\big), \quad\mbox{or equivalently}\quad  \dot{s}_\pm= \pm d^{-\frac 1 \alpha}|\partial_y h|^{\frac 2 \alpha} \quad \mbox{at $\partial\oomega$},
\end{equation}
which coincides with \eqref{BC2-bis}. The Ren-E model with quadratic dissipation has $\alpha=1$\ch{;\st{and}} for a derivation from the Stokes problem see \cite{peschka2018variational}.

\subsection{Discretization}

The weak formulation \eqref{eqn:weakformulation} is discretized using a standard finite element discretization in space. In the derivative of the energy, we replace $\partial_y h$ by $\partial_y h + \tau\partial_y \dot{h}$ in order to achieve a semi-implicit treatment of the highest-order derivative in the time-discretization, a standard method in higher-order parabolic equations. For given $h$, we seek $(\dot{h},\pi,\zeta)$ using $P_1$ finite elements defined in $\oomega$ and on $\partial\oomega$, respectively, and solve
\begin{subequations}
    \label{eqn:discreteweak}
    \begin{align}
         & \int_\oomega\!\!\!\!\!\!\!\!\!\pi v - \tau \ \partial_y\dot{h}\ \partial_y\bar{v} {\,\rm d}y+ \int_{\partial\oomega}\!\!\!\!\!\!\!\!\!\!\zeta \bar{v} {\,\rm d}s  = \int_\oomega\!\!\!\!\!\!\!\!\! \partial_yh \ \partial_y\bar{v} {\,\rm d}y+ \int_{\partial\oomega}
         \!\!\tfrac12|\partial_y h|
         \bar{v}{\,\rm d}s,   \\
    & \int_\oomega\!\!\!\!\!\!\!\!\!\dot{h}{\bar{\pi}} {\,\rm d}y+ \int_{\partial\oomega}\!\!\!\!\!\!\!\!\!\dot{h}{\bar{\zeta}} {\,\rm d}s                 = -\int_\oomega\!\!\!\!\!\!\!\!\!m(h) \ \partial_y\pi \ \partial_y\bar{\pi}\ {\rm d}y - \int_{\partial\oomega}\!\!\!\!\!\!\!\!\!\mcl(\partial_y h)\, \zeta \bar{\zeta} {\,\rm d}s.
    \end{align}
\end{subequations}
where in the numerical scheme $m=m(h)$ and $\mcl=\mcl(\partial_y h)$ are evaluated explicitly from the previous time step.
Now, we introduce an arbitrary Lagrangian-Eulerian method by constructing a mapping $\xi:\oomega(t_0)\to\oomega(t)$ using the linear construction
\begin{align}
    \label{eqn:mapping}
    \xi(t,y) = \frac{s_+(t)-s_-(t)}{s_+(t_0)-s_-(t_0)} \bigl(y-s_-(t_0)\bigr) + s_-(t),
\end{align}
which allows to define the function $H(t):\oomega(t_0)\to\mathbb{R}$ via $H(t,y)=h(t,\xi(t,y))$. By construction we have $H(t,s_\pm(t_0))\equiv 0$ and correspondingly $\partial_t H =0$ at the fixed contact line $y=s_\pm(t_0)$. For the time derivatives we have $\dot{H}(t,y)=\dot{h}(t,\xi) + \partial_y h(t,\xi)\dot{\xi}$. The knowledge of $\dot{h}$ from the solution of \eqref{eqn:discreteweak} and the boundary condition $\dot{H}=0$ entirely determine the time derivatives $\dot{H}$ and the mapping $\dot{\xi}$.
Note that, for a moving front, $\partial_y h=0$ at $\partial\oomega$ is not an issue for the application of $P_1$ finite elements, since on the last element connected to $s_\pm$, \eqref{clnew} yields a possibly small but nonzero value of $|\partial_y h|$. 
In the reference domain we can update both the height function $H(t+\tau,y)=H(t,y)+\tau \dot{H}(t,y)$ and the map $\xi(t+\tau,y)=\xi(t,y)+\tau \dot{\xi}(t,y)$, which uniquely determines $h(t,y)$ on a moving domain. A similar approach for partial wetting is studied in \cite{peschka2018variational} and higher-dimensional extensions are discussed in \cite{peschka2022model}.

%
%

%
\bibliographystyle{unsrt}
\bibliography{friction_selfsimilar_arxiv}

\end{document}